\providecommand{\U}[1]{\protect\rule{.1in}{.1in}}
\newtheorem{theorem}{Theorem}
\newtheorem{corollary}[theorem]{Corollary}
\newtheorem{lemma}[theorem]{Lemma}
\newtheorem{proposition}[theorem]{Proposition}
\begin{document}
\title{Group invariant solutions of certain partial differential equations}
\author{Jaime Ripoll}
\author{Friedrich Tomi}
\date{}
\subjclass[2000]{53A10, 53C42, 49Q05, 49Q20}
\keywords{elliptic pdes, group invariant solutions }

\begin{abstract}
Let $M$ be a complete Riemannian manifold and $G$ a Lie subgroup of the
isometry group of $M$ acting freely and properly on $M.$ We study the
Dirichlet Problem%
\[
\left\{
\begin{array}
[c]{l}%
\operatorname{div}\left(  \frac{a\left(  \left\Vert \nabla u\right\Vert
\right)  }{\left\Vert \nabla u\right\Vert }\nabla u\right)  =0\text{ in
}\Omega\\
u|\partial\Omega=\varphi
\end{array}
\right.
\]
where $\Omega$ is a $G-$invariant domain of $C^{2,\alpha}$ class in $M$ and
$\varphi\in C^{2,\alpha}\left(  \partial\overline{\Omega}\right)  $ a
$G-$invariant function. Two classical PDE's are included in this family: the
$p-$Laplacian $(a(s)=s^{p-1},$ $p>1)$ and the minimal surface equation
$(a(s)=s/\sqrt{1+s^{2}}).$ Our motivation, by using the concept of Riemannian
submersion, is to present a method in studying $G$-invariant solutions for
noncompact Lie groups which allows the reduction of the Dirichlet problem on
unbounded domains to one on bounded domains.

\end{abstract}
\maketitle

\section{Introduction\label{intro}}

\qquad Let $M$ be a complete Riemannian manifold and $G$ a Lie subgroup of the
isometry group of $M$ acting freely and properly on $M.$ In this paper we
study the Dirichlet Problem (DP)%
\begin{equation}
\left\{
\begin{array}
[c]{l}%
\operatorname{div}\left(  \frac{a\left(  \left\Vert \nabla u\right\Vert
\right)  }{\left\Vert \nabla u\right\Vert }\nabla u\right)  =0\text{ in
}\Omega\\
u|\partial\Omega=\varphi
\end{array}
\right.  \label{PDE1}%
\end{equation}
where $\Omega$ is a $G-$invariant domain of $C^{2,\alpha}$ class in $M$ with
non empty boundary and $\varphi\in C^{2,\alpha}\left(  \partial\overline
{\Omega}\right)  $ a $G-$invariant function. We require, as minimal
conditions, that
\[
a\in C^{0}\left(  \left[  0,\infty\right[  \right)  \cap C^{1}\left(  \left]
0,\infty\right[  \right)  ,\text{ }a(s)>0,\text{ }a^{\prime}(s)>0
\]
for $s>0.$ Two classical PDE's are included in this family: the $p-$Laplacian
$(a(s)=s^{p-1},$ $p>1)$ and the minimal surface equation $(a(s)=s/\sqrt
{1+s^{2}}).$ The main motivation in studying problem (\ref{PDE1}) is to
reduce, by considering non compact Lie groups, the DP in unbounded\ domains of
$M$ to bounded domains in the quotient space $M/G$. One should mention that
the existence and uniqueness of solutions to the Dirichlet problem for the
minimal surface equation on unbounded domains has a history which goes back to
the investigations of J. C. C. Nitsche on the so called exterior Dirichlet
problem (see \cite{N}). Several authors continued Nitsche's study in the
Euclidean space \cite{Kr}, \cite{KT}, \cite{Ku}, \cite{RT2} and in Riemannian
spaces in \cite{ER}. Existence and uniqueness for more general unbounded
domains were studied in \cite{ERo}, \cite{CK}, \cite{Sp} in the Euclidean
space and in \cite{T} in the Riemannian setting.

To state our main results we need to remark some facts and to introduce some
terminology. We first observe that the assumptions on the action of $G$ on $M$
guarantee that the orbit space $M/G=\left\{  G(p)\text{
$\vert$
}p\in M\right\}  ,$ where $G(p)=\left\{  g(p)\text{
$\vert$
}g\in G\right\}  ,$ $p\in M,$ is a differentiable manifold with respect to the
quotient topology, and the projection $\pi:M\rightarrow M/G$ is a submersion.
Since the action of $G$ on $M$ is by isometries we may, and we will, consider
in $M/G$ the Riemannian metric such that $\pi$ becomes a Riemannian submersion.

Regarding the PDE in (\ref{PDE1}), the $p-$Laplace and the minimal surface
equation are representatives of two classes of PDE's which are distinguished
as follows (see also \cite{RT1}). The PDE in (\ref{PDE1}) may be written in
the equivalent form%
\begin{equation}
\left\Vert \nabla u\right\Vert ^{2}\Delta u+\left(  \frac{\left\Vert \nabla
u\right\Vert a^{\prime}(\left\Vert \nabla u\right\Vert )}{a(\left\Vert \nabla
u\right\Vert )}-1\right)  \nabla^{2}u\left(  \nabla u,\nabla u\right)
=0\label{PDE2}%
\end{equation}
where $\Delta$ and $\nabla^{2}$ denote the Laplacian and the Hessian. The
quadratic form%
\begin{equation}
q\left(  \xi,\xi\right)  =\left\Vert \nabla u\right\Vert ^{2}\left\vert
\xi\right\vert ^{2}+b\left(  \left\Vert \nabla u\right\Vert \right)
\left\langle \xi,\nabla u\right\rangle ^{2}\label{qf}%
\end{equation}
associated with (\ref{PDE2}), where
\begin{equation}
b\left(  s\right)  =\frac{sa^{\prime}(s)}{a(s)}-1,\label{be}%
\end{equation}
has the eigenvalue%
\[
\beta=\left\Vert \nabla u\right\Vert ^{2}\left(  1+b\left(  \left\Vert \nabla
u\right\Vert \right)  \right)
\]
in the direction of $\nabla u$ and the maximal eigenvalue%
\[
\Lambda=\left\Vert \nabla u\right\Vert ^{2}\max\left\{  1,1+b\left(
\left\Vert \nabla u\right\Vert \right)  \right\}  .
\]
We may easily see that
\[
\frac{\beta}{\Lambda}=1+b^{-}%
\]
where $b^{-}=\min\left\{  b,0\right\}  .$ We consider the following two
possibilities, which include, respectively, the $p-$Laplacian and the minimal
surface equation:

\begin{itemize}
\item \textbf{Condition I }\emph{Mild decay of the eigenvalue ratio}:%
\[
\left(  1+b^{-}(s)\right)  s^{2}\geq g(s),\text{ }s\geq s_{0}>0
\]
where $g$ is non decreasing and%
\[
\int_{s_{0}}^{\infty}\frac{g(s)}{s^{2}}ds=+\infty
\]

\item \textbf{Condition II }\emph{Strong decay of the eingenvalue ratio:}%
\[
\left(  1+b^{-}(s)\right)  s^{2}\geq g(s),\text{ }s\geq s_{0}>0
\]
where $g$ is non increasing and%
\[
\int_{s_{0}}^{\infty}\frac{g(s)}{s}ds=+\infty.
\]

\end{itemize}

The MDER case was introduced by James Serrin in \cite{S} as \emph{regularly
elliptic equations. }We observe that in both MDER and SDER classes the
equations can be singular or degenerated. A typical example, which occurs in
the MDER class, is the $p-$Laplacian PDE (it is singular if $1<p<2$ and
degenerated if $p>2).$

In this paper we consider the same class of equations as in \cite{RT1},
namely: Writing%
\[
a(s)=s^{p-1}A(s),\text{ }s\geq0,
\]
for some $p>1$ we require that
\begin{equation}
A\in C^{2}\left(  \left[  0,\infty\right[  \right)  ,\text{ }A(s)>0\text{ for
}s\geq0 \label{pltype}%
\end{equation}
and that%

\begin{equation}
\min\left\{  1,p-1\right\}  +\frac{sA^{\prime}(s)}{A(s)}>0 \label{nmin1}%
\end{equation}
for all $s\geq0.$ Note that (\ref{nmin1}) implies that $a^{\prime}(s)>0$ for
$s>0.$

We call $p=2$ the \emph{regular} case to which the classical theory of
elliptic differential equations can be applied. The study of the DP in the
nonregular case $p\neq2$ can be reduced to the regular one by a perturbation
technique \cite{RT1}. The minimal surface equation is regular but not the
$p-$Laplacian if $p\neq2,$ as it is easy to see.

Depending on the case which is being investigated extra conditions have to be
required on the PDE. They are:

\begin{itemize}
\item \textbf{Condition III }There are $\beta>0$ and a function $h:\left[
0,\infty\right[  \rightarrow\mathbb{R}$ with $h(s)\rightarrow\infty$ $\left(
s\rightarrow\infty\right)  $ such that%
\[
\left(  b(s)+1-\beta b^{\prime}(s)^{+}s\right)  s^{2}\geq h(s),\text{ }%
s\in\left[  0,\infty\right[  .
\]

\item \textbf{Condition IV }there are positive numbers $\alpha$ and $s_{0}$
such that%
\[
\left(  -b^{\prime}(s)s-\left(  b(s)+1\right)  \right)  s^{2}\geq\alpha,\text{
}s\geq s_{0}%
\]

\end{itemize}

To state our first result in the MDER case, for smooth boundary data, we
recall that $u\in C^{1}\left(  \Omega\right)  \cap$ $C^{0}\left(
\overline{\Omega}\right)  $ is a weak solution of (\ref{PDE1}) if
$u|\partial\Omega=\varphi$ and%
\[
\int_{\Omega}\frac{a\left(  \left\Vert \nabla u\right\Vert \right)
}{\left\Vert \nabla u\right\Vert }\left\langle \nabla u,\nabla g\right\rangle
\omega=0
\]
for all $g\in C^{\infty}\left(  \overline{\Omega}\right)  $ with compact
support in $\Omega,$ where $\omega$ is the volume form of $M.$ If the PDE is
regular then regularity theory implies that a weak solution $u$ is in
$C^{2,\alpha}\left(  \overline{\Omega}\right)  $ and satisfies (\ref{PDE1}) in
the classical sense.

\begin{theorem}
[{\small the MDER case for smooth boundary data}]\label{mder1}Let $M$ be a
complete Riemannian manifold and $G$ a Lie subgroup of the isometry group of
$M$ acting freely and properly on $M.$ Let $\Omega$ be a $G-$invariant domain
of $C^{2,\alpha}$ class. Assume that $\pi\left(  \Omega\right)  $ is bounded
in $M/G\ $and that Conditions I and III are satisfied. Then the Dirichlet
problem (\ref{PDE1}) has an unique $G-$invariant weak solution $u\in
C^{1}\left(  \overline{\Omega}\right)  $ for any $G-$invariant boundary data
$\varphi$ of $C^{2,\alpha}$ class$.$ If $p=2$ the solution is of $C^{2,\alpha
}\ $class in $\overline{\Omega}$ and hence a classical solution.
\end{theorem}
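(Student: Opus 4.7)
The plan is to exploit the $G$-invariance in order to descend the Dirichlet problem (\ref{PDE1}) to a companion problem on the bounded quotient domain $\pi(\Omega)\subset M/G$, and then to invoke the bounded-domain theory developed in \cite{RT}. Since $\pi$ is a Riemannian submersion and $u,\varphi$ are $G$-invariant, I would write $u=\tilde u\circ\pi$ and $\varphi=\tilde\varphi\circ\pi$; then $\nabla u$ is the horizontal lift of $\nabla\tilde u$ and $\|\nabla u\|=\|\nabla\tilde u\|\circ\pi$. A standard Riemannian-submersion identity, applied to the basic horizontal vector field $X=(a(\|\nabla u\|)/\|\nabla u\|)\nabla u$, converts (\ref{PDE1}) into the reduced Dirichlet problem
\[
\operatorname{div}\!\left(\frac{a(\|\nabla\tilde u\|)}{\|\nabla\tilde u\|}\nabla\tilde u\right)\;-\;\frac{a(\|\nabla\tilde u\|)}{\|\nabla\tilde u\|}\langle H,\nabla\tilde u\rangle\;=\;0\quad\text{in }\pi(\Omega),\qquad \tilde u|_{\partial\pi(\Omega)}=\tilde\varphi,
\]
on a bounded $C^{2,\alpha}$ domain of $M/G$ with $C^{2,\alpha}$ boundary datum. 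Here $H$ is the horizontal, $G$-invariant mean curvature vector of the $G$-orbits, which is basic and therefore descends to a $C^{0,\alpha}$ vector field on $\overline{\pi(\Omega)}$.

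Next, I would verify that this reduced equation fits into the MDER framework of \cite{RT}. Conditions I and III depend only on the function $a$ (through $b$) and are inherited, as are the structural hypotheses (\ref{pltype}) and (\ref{nmin1}); the new first-order term has a coefficient bounded on the compact set $\overline{\pi(\Omega)}$, and Condition III is precisely the structural hypothesis that lets such a bounded lower-order perturbation be absorbed in the interior and boundary gradient a priori estimates of \cite{RT}. With those estimates in hand, the Leray--Schauder argument of \cite{RT} produces a classical solution $\tilde u\in C^{2,\alpha}(\overline{\pi(\Omega)})$ when $p=2$, while the perturbation scheme of \cite{RT} (regular approximation plus uniform $C^{1}$ estimates) yields a weak solution $\tilde u\in C^{1}(\overline{\pi(\Omega)})$ when $p\neq 2$. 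Since the weak formulation on $\Omega$ for a $G$-invariant $u$ is equivalent to the weak formulation on $\pi(\Omega)$ for $\tilde u$---by restricting to $G$-invariant test functions $g=\tilde g\circ\pi$, $\tilde g\in C_{c}^{\infty}(\pi(\Omega))$, and integrating along fibers---the pullback $u:=\tilde u\circ\pi\in C^{1}(\overline{\Omega})$ solves (\ref{PDE1}) weakly, and classically with full $C^{2,\alpha}(\overline{\Omega})$ regularity when $p=2$. Uniqueness descends from the comparison principle for the reduced equation.

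The main obstacle I anticipate is the careful verification that the drift term $-(a(\|\nabla\tilde u\|)/\|\nabla\tilde u\|)\langle H,\nabla\tilde u\rangle$ is genuinely controlled within the MDER framework of \cite{RT}: one must check that the divergence $h(s)\to\infty$ in Condition III dominates the contribution of this bounded-coefficient first-order perturbation in the Bernstein-type gradient estimate, uniformly on $\overline{\pi(\Omega)}$. A secondary but non-trivial point is to confirm that $\pi(\Omega)$ is indeed a bounded $C^{2,\alpha}$ domain of $M/G$ and that $H$ extends $C^{0,\alpha}$-regularly up to $\partial\pi(\Omega)$; both follow from the freeness, properness and smoothness of the $G$-action combined with the $C^{2,\alpha}$ regularity of $\Omega$, via a $G$-slice adapted to $\partial\Omega$, but this verification needs to be carried out explicitly before the bounded-domain machinery of \cite{RT} can be applied.
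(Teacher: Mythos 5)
Your proposal follows essentially the same route as the paper: descend to the quotient via the Riemannian submersion (the paper's Proposition \ref{ab}, producing exactly your drift equation with $J$ the projected orbit mean curvature), then extend the barrier constructions and Bernstein-type gradient estimates of \cite{RT} to absorb the $J$-term (the paper's Lemmas \ref{pdegrad} and \ref{lem2}, where the extra terms are shown to have the same structure and the bounds acquire dependence on $\left\Vert J\right\Vert$ and $\left\Vert \nabla J\right\Vert$), with $p=2$ handled classically and $p\neq 2$ by the perturbation scheme of \cite{RT}. The obstacle you flag---verifying that the drift is genuinely controlled in the gradient estimates---is precisely the technical content the paper supplies, so your outline is correct and matches the paper's proof.
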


Notice\ that Conditions I and III are fulfilled if $b(s)+1=cs^{m}$ for some
$c>0,$ $m\geq0.$ In particular, Theorem \ref{mder1} holds for the $p-$Laplace
equation where $b(s)=p-2$ ($p>1).$

Differently of the mild decay eigenvalue ratio, the solvability of the
Dirichlet problem in the strong decay case requires the usual mean convexity
of the domain.

\begin{theorem}
[{\small the SDER case for smooth boundary data}]\label{sder1}Let $M$ be a
complete Riemannian manifold and let $G$ be a Lie subgroup of the isometry
group of $M$ acting freely and properly on $M.$ Let $\Omega\subset M$ be a
$G-$invariant $C^{2,\alpha}$ domain with bounded projection on $M/G.$ Assume
that Conditions II and IV are satisfied and that the mean curvature of
$\partial\Omega$ with respect to the interior normal vector of $\partial
\Omega$ as well as of the inner parallel hypersurfaces of $\partial\Omega$ in
some neighborhood of $\partial\Omega$ is nonnegative. Then the Dirichlet
problem (\ref{PDE1}) has an unique weak $G-$invariant solution for any
$G-$invariant boundary data $\varphi\in C^{2,\alpha}\left(  \partial
\Omega\right)  .$ If $p=2$ the solution is of $C^{2,\alpha}\ $class in
$\overline{\Omega}$ and hence a classical solution.
\end{theorem}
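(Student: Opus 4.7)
The plan is to exploit $G$-invariance to pass to the quotient $M/G$. Since $\pi:M\to M/G$ is a Riemannian submersion and we seek $G$-invariant $u$, the solution descends to a function $\bar u$ on the bounded $C^{2,\alpha}$ domain $\bar\Omega:=\pi(\Omega)\subset M/G$. The submersion formulas turn (\ref{PDE1}) into a quasilinear equation on $\bar\Omega$ of the same structural type as (\ref{PDE2}) but with an additional lower-order first-order term involving the mean curvature vector of the $G$-orbits. This reduces the DP on a possibly unbounded domain in $M$ to a Dirichlet problem for a quasilinear elliptic equation of the same class on a \emph{bounded} $C^{2,\alpha}$ domain in $M/G$, with boundary data that is still $C^{2,\alpha}$ (as $\varphi$ is $G$-invariant).

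Having reduced to a bounded domain, I would first treat the regular case $p=2$. The general case follows from it by the perturbation technique of \cite{RT}: approximate $a$ by regularized $a_\varepsilon$ with $p=2$, solve the regularized DP, and pass to the limit using the uniform estimates below. So the main work is in the regular case, where existence reduces to the Leray--Schauder continuity method, which requires three uniform a priori estimates: a $C^0$ bound, an interior gradient bound, and a boundary gradient bound. The $C^0$ bound is immediate from the comparison principle, which follows from strict monotonicity of $a$. For the interior gradient bound I would use a Bernstein-type argument applied to $\Vert\nabla u\Vert^2$: Condition IV ensures that the coercive terms in the Bernstein identity dominate the error terms coming from curvature of $M/G$ and from the orbit mean-curvature term, yielding a uniform bound in any compact subdomain.

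The principal obstacle is the boundary gradient estimate, and this is precisely where the geometric assumption on $\partial\Omega$ enters. On a tubular neighborhood of $\partial\Omega$ on which the distance function $d$ to $\partial\Omega$ is smooth, I would construct an upper barrier of the form
\[
w = \varphi + \psi(d),
\]
with $\psi$ smooth, concave, satisfying $\psi(0)=0$ and $\psi'(0)$ sufficiently large. Inserting $w$ into the operator and expanding via (\ref{PDE2}) produces a term $-\psi'\,\Delta d$ together with terms involving $\psi''$ and the tangential derivatives of $\varphi$ which are controlled by Condition II through the function $g$. The hypothesis that $\partial\Omega$ and its inner parallel hypersurfaces have nonnegative mean curvature translates precisely to $\Delta d\le 0$ in a one-sided neighborhood, so the $-\psi'\,\Delta d$ contribution has the right sign; Condition II then allows an ODE construction of $\psi$, as in the Jenkins--Serrin theory of the minimal surface equation, making $w$ a supersolution with $\psi(d)\to\infty$ if necessary to control the barrier slope. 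A symmetric lower barrier is built analogously. The resulting boundary gradient bound closes the continuity method and yields, together with standard Schauder regularity, a classical $C^{2,\alpha}$ solution in the regular case, hence by the perturbation argument a $G$-invariant weak solution in general. Uniqueness is then a direct consequence of the comparison principle.
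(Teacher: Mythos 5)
Your overall architecture does match the paper's: reduction to the bounded quotient domain via the Riemannian submersion (this is Proposition \ref{ab}, and the extra first-order term you predict is the drift $\frac{a\left(\Vert\nabla v\Vert\right)}{\Vert\nabla v\Vert}\left\langle\nabla v,J\right\rangle$ with $J$ the projected mean curvature vector of the orbits); the regular case $p=2$ first, with the nonregular case recovered by the perturbation scheme of \cite{RT}; the $C^{0}$ bound and uniqueness from the comparison principle; interior gradient bounds by a Bernstein-type argument (the paper's Lemmas \ref{pdegrad} and \ref{lem2}, with constants now depending also on $\Vert J\Vert$ and $\Vert\nabla J\Vert$); and a boundary barrier of the form $\psi+f(d)$ with $f$ concave.

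However, your barrier step has a genuine gap, and it sits exactly at the one point where this theorem requires something beyond the bounded-domain theory of \cite{RT}. After the reduction, the barrier must be a supersolution of the reduced operator $Q_{J}$ on $\Lambda=\pi\left(\Omega\right)$, so $d$ is the distance to $\partial\Lambda$ in $M/G$. Your assertion that nonnegative mean curvature of $\partial\Omega$ and its inner parallels ``translates precisely to $\Delta d\leq0$'' is false unless the orbits are minimal: the hypothesis controls $H_{\partial\Omega}$ upstairs, and by the paper's Lemma \ref{hahb} one has $H_{\partial\Omega}=H_{\partial\Lambda}\circ\pi+\left\langle\overrightarrow{H}_{G},\nu\right\rangle$, so mean convexity in $M$ is equivalent not to mean convexity of $\partial\Lambda_{d}$ but to $H_{\partial\Lambda_{d}}\geq-\left\langle J,\eta_{d}\right\rangle$, i.e.\ to $\Delta d-\left\langle\nabla d,J\right\rangle\leq0$ on the quotient, which is precisely condition (\ref{s}) of Proposition \ref{bgem}. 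When $\left\langle J,\eta_{d}\right\rangle\neq0$ neither implication between ``$\Delta d\leq0$'' and the theorem's hypothesis holds in general; what saves the construction is that the drift term of $Q_{J}$ contributes $-f^{\prime}\left\langle\nabla d,J\right\rangle$, which must be grouped with $f^{\prime}\Delta d$ before any sign can be extracted --- only the combination is controlled, not either summand separately (in the paper's hyperbolic example, Corollary \ref{h1}, it is exactly the sign of $\left\langle\eta,J\right\rangle$ that is verified). Alternatively one could run the barrier upstairs in $M$, where $\Delta d\leq0$ genuinely is the hypothesis, but then $w$ must be inserted into the original divergence-form operator on $M$, not into the reduced one; your write-up reduces to the quotient and then invokes the upstairs reading of the curvature hypothesis, conflating the two settings. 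Without Lemma \ref{hahb} (or the equivalent computation carried out entirely upstairs) the boundary gradient estimate, and with it the continuity method, does not close.
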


Since $\Omega$ is invariant by $G$ and has compact projection on $M/G$, the
mean convexity of the parallel hypersurfaces of $\partial\Omega$ holds in a
neighborhood of $\partial\Omega$ if one requires $\partial\Omega$ to be
strictly mean convex$.$ It also holds if $\partial\Omega$ is only mean convex
and $M$ has nonnegative Ricci curvature in an uniform neighborhood of
$\partial\Omega.$

We conclude the paper with two examples to illustrate the power of Theorem
\ref{sder1}: In the first one we consider the Dirichlet problem for the
minimal surface equation on $\mathbb{R}^{3}$ which invariance under helicoidal
motions$,$ in the second one the asymptotic Dirichlet problem on
$\mathbb{H}^{n}$ with invariance under transvections.

\section{A new PDE}

\qquad Let $M$ be a complete Riemannian manifold and let $G$ be a Lie subgroup
of the isometry group of $M.$ Assume that $G$ acts freely and properly on $M.$
Given $p\in M$ let $G(p)=\left\{  g(p)\text{
$\vert$
}g\in G\right\}  $ be the orbit of $G$ through $p.$ Then the orbit space
\[
M/G:=\left\{  G(p)\text{
$\vert$
}p\in M\right\}
\]
is a differentiable manifold with the quotient topology and the projection
$\pi:M\rightarrow M/G$ is a submersion. We consider in $M/G$ the Riemannian
metric such that $\pi$ becomes a Riemannian submersion.

We denote by $\overrightarrow{H}_{G}$ the mean curvature vector of the orbits
of $G$ that is,%
\[
\overrightarrow{H}_{G}=\sum_{i=1}^{k}\left(  \nabla_{E_{i}}E_{i}\right)
^{\bot},
\]
where $\left\{  E_{i}\right\}  $ is a local orthonormal frame tangent to a
orbit of $G.$ Note that $\overrightarrow{H}_{G}$ is $G-$invariant, $g_{\ast
}\overrightarrow{H}_{G}=\overrightarrow{H}_{G}\circ g$ for all $g\in G.$ Then
it projects into a vector field of $M/G$ which we denote by $J.$ We denote by
$\nabla$ the gradient and also the Riemannian connections in $M$ and $M/G$.
The meaning of the notation will be clear~from the context.

If $X$ is a vector field in $M/G$ we denote by $\overline{X}$ the vector field
in $M$ determined by the horizontal lift of $X$ to $M$ namely, $\pi_{\ast
}\overline{X}=X\circ\pi$ and $\overline{X}\left(  p\right)  \in T_{p}G\left(
p\right)  ^{\bot}$ for all $p\in M.$

\begin{proposition}
\label{ab}Let $\Omega$ be a $G-$invariant domain of $C^{2}$ class in $M$,
$u\in C^{2}\left(  \Omega\right)  \cap C^{0}\left(  \overline{\Omega}\right)
$ and $\varphi\in C^{0}\left(  \partial\Omega\right)  $ $G-$invariant
functions, $\Lambda=\pi(\Omega)\subset M/G$, $v\in C^{2}\left(  \Lambda
\right)  \cap C^{0}\left(  \overline{\Lambda}\right)  $ and $\psi\in
C^{0}\left(  \partial\Lambda\right)  $ such that $u=v\circ\pi,$ $\varphi
=\psi\circ\pi$. Then%
\[
\frac{a\left(  \left\Vert \nabla u\right\Vert \right)  }{\left\Vert \nabla
u\right\Vert }=\frac{a\left(  \left\Vert \nabla v\right\Vert \right)
}{\left\Vert \nabla v\right\Vert }\circ\pi
\]
and $u$ is a solution of the DP%
\[
\left\{
\begin{array}
[c]{l}%
\operatorname{div}_{M}\left(  \frac{a\left(  \left\Vert \nabla u\right\Vert
\right)  }{\left\Vert \nabla u\right\Vert }\nabla u=0\right)  \text{ in
}\Omega\\
u|\partial\Omega=\varphi
\end{array}
\right.
\]
if and only if $v$ is a solution of the DP%
\begin{equation}
\left\{
\begin{array}
[c]{l}%
\operatorname{div}_{M/G}\left(  \frac{a\left(  \left\Vert \nabla v\right\Vert
\right)  }{\left\Vert \nabla v\right\Vert }\nabla v\right)  -\frac{a\left(
\left\Vert \nabla v\right\Vert \right)  }{\left\Vert \nabla v\right\Vert
}\left\langle \nabla v,J\right\rangle =0\text{ in }\Lambda\\
v|\partial\Lambda=\psi.
\end{array}
\right.  \label{npde}%
\end{equation}

\end{proposition}

\begin{proof}
Assume that $n=\dim M$ and $k=\dim G.$ Given $p\in M$ let $E_{1},...,E_{m}$ be
a local orthonormal frame in a neighborhood of $p$ which is orthogonal to the
orbits of $G,$ $m=n-k,$ and let $X_{1},...,X_{k}$ be Killing fields determined
by $G$ which are orthonormal at $p.$

Using that $E_{i}=\overline{F_{i}}$ where $F_{i},$ $i=1,...,n-1,$ is an local
orthonormal frame around $\pi(p)$ in $M/G$ we obtain $\nabla u=\overline
{\nabla v}.$ It follows that the function $a\left(  \left\Vert \nabla
u\right\Vert \right)  /\left\Vert \nabla u\right\Vert $ is invariant by $G$
and then it is well defined in $M/G$ and we have%
\[
\frac{a\left(  \left\Vert \nabla u\right\Vert \right)  }{\left\Vert \nabla
u\right\Vert }=\frac{a\left(  \left\Vert \nabla v\right\Vert \right)
}{\left\Vert \nabla v\right\Vert }\circ\pi.
\]

Moreover, at $p,$%
\[
\overrightarrow{H}_{G}(p)=\sum_{i=1}^{k}\left(  \nabla_{X_{i}}X_{i}\right)
^{\bot}(p)
\]
and
\begin{align*}
\operatorname{div}_{M}\left(  \frac{a\left(  \left\Vert \nabla u\right\Vert
\right)  }{\left\Vert \nabla u\right\Vert }\nabla u\right)   &  =\sum
_{i=1}^{m}\left\langle \nabla_{E_{i}}\left(  \frac{a\left(  \left\Vert \nabla
u\right\Vert \right)  }{\left\Vert \nabla u\right\Vert }\nabla u\right)
,E_{i}\right\rangle \\
&  +\sum_{i=1}^{k}\left\langle \nabla_{X_{i}}\left(  \frac{a\left(  \left\Vert
\nabla u\right\Vert \right)  }{\left\Vert \nabla u\right\Vert }\nabla
u\right)  ,X_{i}\right\rangle .
\end{align*}

Also,%
\begin{align*}
\left\langle \nabla_{E_{i}}\left(  \frac{a\left(  \left\Vert \nabla
u\right\Vert \right)  }{\left\Vert \nabla u\right\Vert }\overline{\nabla
}u\right)  ,E_{i}\right\rangle  &  =E_{i}\left(  \frac{a\left(  \left\Vert
\nabla u\right\Vert \right)  }{\left\Vert \nabla u\right\Vert }\right)
\left\langle \nabla u,E_{i}\right\rangle \\
&  +\frac{a\left(  \left\Vert \nabla u\right\Vert \right)  }{\left\Vert \nabla
u\right\Vert }\left\langle \nabla_{E_{i}}\nabla u,E_{i}\right\rangle \\
&  =F_{i}\left(  \frac{a\left(  \left\Vert \nabla v\right\Vert \right)
}{\left\Vert \nabla v\right\Vert }\right)  \left\langle \nabla v,F_{i}%
\right\rangle \circ\pi\\
&  +\frac{a\left(  \left\Vert \nabla v\right\Vert \right)  }{\left\Vert \nabla
v\right\Vert }\circ\pi\left\langle \nabla_{\overline{F}_{i}}\nabla
u,\overline{F}_{i}\right\rangle .
\end{align*}

Using O'Neal's formula for Riemannian submersions (\cite{doC}, exercises of
Ch. 8) namely,%
\begin{equation}
\nabla_{\overline{X}}\overline{Y}=\overline{\nabla_{X}Y}+\frac{1}{2}\left[
\overline{X},\overline{Y}\right]  ^{V}, \label{ers}%
\end{equation}
where $X,Y$ are vector fields on $M/G,$ $\overline{X}$ and $\overline{Y}$
their horizontal lift in $M$ and $V$ the orthogonal projection on $TG,$ we
obtain%
\[
\left\langle \nabla_{\overline{F}_{i}}\nabla u,\overline{F}_{i}\right\rangle
=\left\langle \nabla_{\overline{F}_{i}}\overline{\nabla v},\overline{F}%
_{i}\right\rangle =\left\langle \overline{\nabla_{F_{i}}\nabla v},\overline
{F}_{i}\right\rangle =\left\langle \nabla_{F_{i}}\nabla v,F_{i}\right\rangle
\circ\pi
\]
and hence%
\begin{align*}
\sum_{i=1}^{m}\left\langle \nabla_{E_{i}}\left(  \frac{a\left(  \left\Vert
\nabla u\right\Vert \right)  }{\left\Vert \nabla u\right\Vert }\nabla
u\right)  ,E_{i}\right\rangle  &  =\sum_{i=1}^{m}\left\{  F_{i}\left(
\frac{a\left(  \left\Vert \nabla v\right\Vert \right)  }{\left\Vert \nabla
v\right\Vert }\right)  \left\langle \nabla v,F_{i}\right\rangle \circ
\pi\right. \\
&  +\left.  \left(  \frac{a\left(  \left\Vert \nabla v\right\Vert \right)
}{\left\Vert \nabla v\right\Vert }\sum_{i=1}^{m}\left\langle \nabla_{F_{i}%
}\nabla v,F_{i}\right\rangle \right)  \circ\pi\right\} \\
&  =\operatorname{div}_{M/G}\left(  \frac{a\left(  \left\Vert \nabla
v\right\Vert \right)  }{\left\Vert \nabla v\right\Vert }\nabla v\right)
\circ\pi.
\end{align*}

Since $\left\langle \nabla u,X_{i}\right\rangle =0$, $1\leq i\leq k,$ it
follows that, at $p$,%
\begin{align*}
\sum_{i=1}^{k}\left\langle \nabla_{X_{i}}\left(  \frac{a\left(  \left\Vert
\nabla u\right\Vert \right)  }{\left\Vert \nabla u\right\Vert }\nabla
u\right)  ,X_{i}\right\rangle  &  =\frac{a\left(  \left\Vert \nabla
u\right\Vert \right)  }{\left\Vert \nabla u\right\Vert }\sum_{i=1}%
^{k}\left\langle \nabla_{X_{i}}\nabla u,X_{i}\right\rangle \\
&  =-\frac{a\left(  \left\Vert \nabla u\right\Vert \right)  }{\left\Vert
\nabla u\right\Vert }\sum_{i=1}^{k}\left\langle \nabla u,\nabla_{X_{i}}%
X_{i}\right\rangle \\
&  =-\left[  \frac{a\left(  \left\Vert \nabla v\right\Vert \right)
}{\left\Vert \nabla v\right\Vert }\left\langle \nabla v,J\right\rangle
\right]  \circ\pi
\end{align*}
which concludes the proof of the proposition.
\end{proof}

We use the notations of the proposition above to prove a lemma to be used in
the proof of Theorem \ref{sder1}:

\begin{lemma}
\label{hahb}Under the same hypothesis of Proposition \ref{ab}, denote by
$H_{\partial\Omega}$ and $H_{\partial\Lambda}$ the non normalized mean
curvature of $\partial\Omega$ and $\partial\Lambda$ with respect to the unit
normal vector fields pointing to the interior of the domains. Then%
\[
H_{\partial\Omega}=H_{\partial\Lambda}\circ\pi+\left\langle \overrightarrow
{H}_{G},\nu\right\rangle
\]
where $\nu$ is the unit normal vector field along $\partial\Omega$ pointing to
$\Omega.$
\end{lemma}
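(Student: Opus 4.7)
The plan is to compute $H_{\partial\Omega}(p)$ at an arbitrary point $p\in\partial\Omega$ using an orthonormal frame adapted to the Riemannian submersion $\pi$, split the resulting trace into a horizontal sum and an orbit-direction sum, and identify each piece via the O'Neill identity (\ref{ers}) and the definition of $\overrightarrow{H}_{G}$. Since $\partial\Omega$ is $G$-invariant, the orbit $G(p)$ lies in $\partial\Omega$, so the inward unit normal $\nu(p)$ is orthogonal to $T_pG(p)$, i.e.\ horizontal; equivalently $\nu=\overline{N}$, where $N$ is the inward unit normal of $\partial\Lambda$ at $\pi(p)$. Hence the horizontal part of $T_p\partial\Omega$ projects isomorphically onto $T_{\pi(p)}\partial\Lambda$, and I can choose a local orthonormal frame $F_1,\dots,F_{m-1}$ tangent to $\partial\Lambda$ near $\pi(p)$ whose horizontal lifts $E_i=\overline{F_i}$ are tangent to $\partial\Omega$; together with Killing fields $X_1,\dots,X_k$ induced by $G$ and orthonormal at $p$, this yields an orthonormal frame $E_1,\dots,E_{m-1},X_1,\dots,X_k,\nu$ of $T_pM$ whose first $n-1$ entries span $T_p\partial\Omega$.

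With this frame in place I would expand
\[
H_{\partial\Omega}(p)=\sum_{i=1}^{m-1}\langle\nabla_{E_i}E_i,\nu\rangle+\sum_{j=1}^{k}\langle\nabla_{X_j}X_j,\nu\rangle.
\]
For the horizontal sum, formula (\ref{ers}) applied with $X=Y=F_i$ gives $\nabla_{E_i}E_i=\overline{\nabla_{F_i}F_i}$ (the vertical bracket term vanishes trivially, since $[\overline{F_i},\overline{F_i}]=0$), so $\langle\nabla_{E_i}E_i,\nu\rangle=\langle\nabla_{F_i}F_i,N\rangle\circ\pi$, and summing over $i$ produces $H_{\partial\Lambda}\circ\pi$. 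For the orbit-direction sum, since $\nu$ is horizontal it pairs trivially with the components of $\nabla_{X_j}X_j$ tangent to the orbit, so
\[
\sum_{j=1}^{k}\langle\nabla_{X_j}X_j,\nu\rangle=\Big\langle\sum_{j=1}^{k}(\nabla_{X_j}X_j)^{\perp},\nu\Big\rangle=\langle\overrightarrow{H}_{G},\nu\rangle
\]
by the very definition of $\overrightarrow{H}_{G}$. Adding the two contributions gives the claimed identity.

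The main subtlety, such as it is, lies in setting up the adapted frame: one must check that the horizontal lifts of a frame on $\partial\Lambda$ really are tangent to $\partial\Omega$ (this follows from $G$-invariance of $\partial\Omega$ together with the bijective correspondence between horizontal vectors in $M$ and their projections under $\pi_{\ast}$), and that the mean curvature expression is invariant under the auxiliary choice of extensions of the frame fields to a neighborhood of $p$. Both points are standard; once the adapted frame is fixed the identity reduces to a direct expansion of the second fundamental form, mirroring the divergence computation carried out in Proposition \ref{ab}.
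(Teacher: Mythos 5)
Your proposal is correct and takes essentially the same route as the paper's own proof: the identical adapted frame $E_{1},\dots,E_{m-1},X_{1},\dots,X_{k}$ with $E_{i}=\overline{F}_{i}$, the same split of the trace into a horizontal and an orbit-direction sum, O'Neill's formula (\ref{ers}) converting the horizontal part into $H_{\partial\Lambda}\circ\pi$, and the definition of $\overrightarrow{H}_{G}$ absorbing the Killing-field terms. Your closing remarks (tangency of the lifts to $\partial\Omega$ and tensoriality of the second fundamental form under choices of frame extension) merely make explicit points the paper leaves implicit, so nothing is missing.
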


\begin{proof}
Let $p\in\partial\Omega$ be given. Let $F_{1},...,F_{m-1}$ be a local
orthonormal frame in a neighborhood of $\pi\left(  p\right)  ,$ tangent to
$\partial\Lambda.$ Let $E_{i}=\overline{F}_{i}$ be the horizontal lift of
$F_{i}$ to $M$. Let $X_{1},...,X_{k}$ be Killing fields determined by $G$
which are orthonormal at $p.$ Let $\nu$ be the unit normal vector field
orthogonal to $\partial\Omega$ pointing to $\Omega.$ Since clearly $\nu$ is
invariant by $G$ and horizontal, it projects into the unit normal vector
$\eta$ orthogonal to $\partial\Lambda$ pointing to $\Lambda.$ We then have, at
$p,$ using (\ref{ers}),%
\begin{align*}
H_{\partial\Omega}  &  =\sum_{i=1}^{m-1}\left\langle \nabla_{E_{i}}E_{i}%
,\nu\right\rangle +\sum_{i=1}^{k}\left\langle \nabla_{X_{i}}X_{i}%
,\nu\right\rangle \\
&  =\sum_{i=1}^{m-1}\left\langle \nabla_{\overline{F}_{i}}\overline{F}%
_{i},\overline{\eta}\right\rangle +\sum_{i=1}^{k}\left\langle \left(
\nabla_{X_{i}}X_{i}\right)  ^{\bot},\nu\right\rangle \\
&  =\sum_{i=1}^{m-1}\left\langle \nabla_{F_{i}}F_{i},\eta\right\rangle
\circ\pi+\sum_{i=1}^{k}\left\langle \overrightarrow{H}_{G},\nu\right\rangle \\
&  =H_{\partial\Lambda}\circ\pi+\sum_{i=1}^{k}\left\langle \overrightarrow
{H}_{G},\nu\right\rangle
\end{align*}
proving the lemma.
\end{proof}

\subsection{Remark}

\qquad It is easy to see that if $\Omega$ is a bounded $C^{1}$ domain then the
PDE in (\ref{PDE1}) is the Euler-Lagrange equation of the functional%
\[
F\left(  u\right)  =\int_{\Omega}\phi\left(  \left\Vert \nabla u\right\Vert
\right)  ,\text{ }u\in C^{1}\left(  \overline{\Omega}\right)  ,
\]
where $\phi^{\prime}=a.$

Let us now assume that $G$ is compact, $\Omega$ a $G-$invariant domain and
$u\in C^{1}\left(  \overline{\Omega}\right)  $ a $G-$invariant function. Then,
setting $\Lambda=\pi\left(  \Omega\right)  ,$ defining $v\in C^{1}\left(
\overline{\Lambda}\right)  $ such that $u=v\circ\pi$ and denoting by
$\mathcal{H}^{s}$ the $s-$dimensional Hausdorff measure of $M$ we have,\ by
the coarea formula,%
\begin{align*}
F\left(  u\right)   &  =\int_{x\in\Lambda}\left(  \int_{\pi^{-1}\left(
x\right)  }\frac{1}{\left\Vert \operatorname*{Jac}\left(  \pi\right)
\right\Vert }\phi\left(  \left\Vert \nabla u\right\Vert \right)
d\mathcal{H}^{k}\right)  d\mathcal{H}^{m}\\
&  =\int_{x\in\Lambda}\left(  \int_{\pi^{-1}\left(  x\right)  }\phi\left(
\left\Vert \overline{\nabla v}\right\Vert \right)  d\mathcal{H}^{k}\right)
d\mathcal{H}^{m}\\
&  =\int_{x\in\Lambda}\phi\left(  \left\Vert \nabla v\left(  x\right)
\right\Vert \right)  \left(  \int_{\pi^{-1}\left(  x\right)  }d\mathcal{H}%
^{k}\right)  d\mathcal{H}^{m}\\
&  =\int_{x\in\Lambda}\phi\left(  \left\Vert \nabla v\left(  x\right)
\right\Vert \right)  \operatorname*{Vol}\left(  \pi^{-1}\left(  x\right)
\right)  d\mathcal{H}^{m}.
\end{align*}
where we have used that $\left\Vert \operatorname*{Jac}\left(  \pi\right)
\right\Vert =1$ as it is easy to see. The last integral defines a functional
on $M/G$ namely%
\[
F_{G}\left(  v\right)  =\int_{\Lambda}V\phi\left(  \left\Vert \nabla
v\right\Vert \right)  d\mathcal{H}^{m},\text{ }v\in C^{1}\left(
\overline{\Lambda}\right)  ,
\]
where $V\in C^{1}\left(  \overline{\Lambda}\right)  $ is defined by $V\left(
x\right)  =$ $\operatorname*{Vol}\left(  \pi^{-1}\left(  x\right)  \right)  ,$
$x\in\overline{\Lambda}$, and it follows that the new PDE (the PDE in
(\ref{npde})) is the Euler-Lagrange equation of $F_{G}.$

\section{\label{thepde}Proofs of Theorems \ref{mder1} and \ref{sder1}}

\qquad In this section we prove that the conditions for the solvability of the
DP
\begin{equation}
\left\{
\begin{array}
[c]{l}%
\operatorname{div}_{N}\left(  \frac{a\left(  \left\Vert \nabla u\right\Vert
\right)  }{\left\Vert \nabla u\right\Vert }\nabla u\right)  -\frac{a\left(
\left\Vert \nabla u\right\Vert \right)  }{\left\Vert \nabla u\right\Vert
}\left\langle \nabla u,J\right\rangle =0\text{ in }\Lambda\\
u|\partial\Lambda=\psi,
\end{array}
\right.  \label{pdej}%
\end{equation}
where $N$ is a complete Riemannian manifold, $\Lambda$ a bounded domain of
$C^{2,\alpha}$ class in $N$ and $J$ is a smooth vector field on $N$ are the
same as in the case that $J=0$ with exception to the boundary condition as we
explain later. We deal only with the regular case ($p=2).$ The case of
nonregular equations equations are dealt with by the same perturbation
technique used in \cite{RT1}. For proving Theorems \ref{mder1} and \ref{sder1}
we then take $N=M/G,$ $\Lambda=\pi\left(  \Omega\right)  $, $\psi$ such that
$\varphi\circ\pi=\psi$ and apply Proposition \ref{ab}$.$

We begin observing that the terminologies \emph{regular, MDER }(Condition I)
and\emph{ SDER} (Condition II) as well as Conditions III and IV introduced
previously apply to the in PDE (\ref{pdej}) since they depend only on the
behavior of the function $a.$

Since the equation (\ref{pdej}) is not of divergence form the comparison
principle for weak solutions as in \cite{RT1} is not immediately applicable.
Instead, we may use the classical maximum and minimum principles for sub and
supersolutions of $C^{2}$ class, respectively (\cite{GT}, Chapter 3). On
account of our assumption that $A$ is in $C^{2}\left(  \left[  0,\infty
\right[  \right)  $ one easily sees that if $v,w\in C^{2}\left(
\Lambda\right)  $ are sub and supersolutions of (\ref{pdej}) then $w-v$
satisfies an elliptic linear differential inequality with locally bounded
coefficients to which the maximum principle applies. Hence, if%
\[
\liminf_{x\rightarrow\partial\Lambda}\left(  w-v\right)  (x)\geq0
\]
then $w-v\geq0$ in $\Lambda$ that is, the PDE (\ref{pdej}) satisfies the
comparison principle. In particular, since the constant functions are
solutions of (\ref{pdej}), it follows that if $u\in C^{2}\left(
\Lambda\right)  \cap C^{0}\left(  \overline{\Lambda}\right)  $ and
$u|\partial\Lambda=\psi$ then%
\[
\sup_{\Lambda}\left\vert u\right\vert \leq\sup_{\partial\Lambda}\left\vert
\psi\right\vert .
\]
\newline

\subsection{\label{bge}Boundary gradient estimates. Barriers}

\qquad In this section we obtain estimates of the norm of the gradient of a
solution at the boundary of the domain. We use the technique and several
calculations done in \cite{RT1}. We first observe that the PDE in (\ref{pdej})
is equivalent to the PDE%
\begin{equation}
Q_{J}\left[  u\right]  :=\Delta u+b\left(  \left\Vert \nabla u\right\Vert
\right)  \nabla^{2}u\left(  \frac{\nabla u}{\left\Vert \nabla u\right\Vert
},\frac{\nabla u}{\left\Vert \nabla u\right\Vert }\right)  -\left\langle
\nabla u,J\right\rangle =0. \label{PDE3}%
\end{equation}

Assume that $\psi\in C^{2,\alpha}\left(  \overline{\Lambda}\right)  $ and let
$\delta_{0}>0$ be such that $d\left(  x\right)  :=d\left(  x,\partial
\Lambda\right)  ,$ $x\in\overline{\Lambda},$ is $C^{2}$ in the strip%
\[
\overline{\Lambda}_{\delta_{0}}:=\left\{  x\in\overline{\Lambda}\text{
$\vert$
}d(x)\leq\delta_{0}\right\}  .
\]
The barrier is of the form $w=\psi+f(d)$ with $f(0)=0,$ $f\in C^{2}\left(
\left[  0,\infty\right[  \right)  .$ A computation gives%
\begin{equation}
Q_{J}\left[  w\right]  =\mathcal{L}_{w}g+f^{\prime}\mathcal{L}_{w}%
d+f^{\prime\prime}\left(  1+b\left\langle \nabla d,\frac{\nabla w}{\left\Vert
\nabla w\right\Vert }\right\rangle ^{2}\right)  \label{ele}%
\end{equation}
where $\mathcal{L}$ is the linear differential operator%
\[
\mathcal{L}_{w}v=\Delta v+b\left(  \left\Vert \nabla w\right\Vert \right)
\nabla^{2}v\left(  \frac{\nabla w}{\left\Vert \nabla w\right\Vert }%
,\frac{\nabla w}{\left\Vert \nabla w\right\Vert }\right)  -\left\langle \nabla
v,J\right\rangle .
\]

We have
\begin{align*}
\left\vert \mathcal{L}_{w}v\right\vert  &  \leq\sqrt{m}B\left\vert \nabla
^{2}v\right\vert +\left\Vert J\right\Vert \left\Vert \nabla v\right\Vert \\
&  \leq B\left(  \sqrt{m}+\left\Vert J\right\Vert \right)  \left(  \left\vert
\nabla^{2}v\right\vert +\left\Vert \nabla v\right\Vert \right)
\end{align*}
where $B=\max\left\{  1,1+b\right\}  ,$ $m=\dim N.$

Setting
\begin{equation}
c_{1}=\max_{\overline{\Lambda}_{\delta_{0}}}\left\Vert \nabla\psi\right\Vert
\label{cc1}%
\end{equation}
we have%
\begin{equation}
f^{\prime}-c_{1}\leq\left\Vert \nabla w\right\Vert \leq f^{\prime}+c_{1}
\label{c1}%
\end{equation}
and hence%
\begin{equation}
\frac{2}{3}f^{\prime}\leq\left\Vert \nabla w\right\Vert \leq\frac{4}%
{3}f^{\prime} \label{doister}%
\end{equation}
provided that
\begin{equation}
f^{\prime}\geq\alpha\geq\max\left\{  1,3c_{1}\right\}  , \label{alf}%
\end{equation}
where the number $\alpha$ will be appropriately chosen later on. We assume
$f^{\prime\prime}\leq0$ and construct a supersolution$.$

In the mild decay case we have%
\begin{gather*}
1+b\left\langle \nabla d,\frac{\nabla w}{\left\Vert \nabla w\right\Vert
}\right\rangle ^{2}\geq\left(  1+b\right)  \left\langle \nabla d,\frac{\nabla
w}{\left\Vert \nabla w\right\Vert }\right\rangle ^{2}\\
=\frac{1+b}{f^{\prime2}}\left\langle \nabla w-\nabla\psi,\frac{\nabla
w}{\left\Vert \nabla w\right\Vert }\right\rangle ^{2}\geq\frac{1+b}%
{f^{\prime2}}\left(  \left\Vert \nabla w\right\Vert -c_{1}\right)  ^{2}\\
\geq\frac{1}{4}\frac{1+b}{f^{\prime2}}\left\Vert \nabla w\right\Vert ^{2}.
\end{gather*}
This implies%
\[
\frac{4}{f^{\prime}B}Q_{J}\left[  w\right]  \leq C+\frac{f^{\prime\prime}%
}{f^{\prime3}}\frac{b+1}{B}\left\Vert \nabla w\right\Vert ^{2}%
\]
with%
\[
C=4\max_{\overline{\Omega}_{\delta}}\left(  \left(  \sqrt{m}+\left\Vert
J\right\Vert \right)  \left(  1+\left\Vert \nabla\psi\right\Vert +\left\vert
\nabla^{2}\psi\right\vert +\left\vert \nabla^{2}d\right\vert \right)  \right)
.
\]

Now, from Condition I we obtain%
\[
\frac{4}{f^{\prime}B}Q\left[  w\right]  \leq C+\frac{f^{\prime\prime}%
}{f^{\prime3}}\varphi\left(  \frac{2}{3}f^{\prime}\right)  .
\]
We may now apply exactly the same calculation done in (\cite{RT1}, pp 16 - 17)
to obtain a supersolution.

In the strong decay eigenvalue ratio case we require the condition%
\[
\Delta d-\left\langle \nabla d,J\right\rangle \leq0\text{ in }\Lambda
_{\delta_{0}}.
\]
With this condition we obtain%
\begin{align*}
f^{\prime}\mathcal{L}_{w}d  &  \leq\frac{f^{\prime}b}{\left\Vert w\right\Vert
^{2}}\nabla^{2}d\left(  \nabla\psi+f^{\prime}\nabla d,\nabla\psi+f^{\prime
}\nabla d\right) \\
&  =\frac{f^{\prime2}b}{\left\Vert w\right\Vert ^{2}}\left(  2\nabla
^{2}d\left(  \nabla\psi,\nabla d\right)  +\frac{1}{f^{\prime}}\nabla
^{2}d\left(  \nabla\psi,\nabla\psi\right)  \right) \\
&  \leq\frac{9}{4}B\left\vert \nabla^{2}d\right\vert \left(  2c_{1}+c_{1}%
^{2}\right)  \leq Bc_{0}%
\end{align*}
where $c_{1}$ is given by (\ref{cc1}) and (\ref{doister}) is supposed to hold.
It follows that%

\[
\frac{4}{B}Q_{J}\left[  w\right]  \leq4\left(  \sqrt{m}+\left\Vert
J\right\Vert \right)  \left(  \left\Vert \nabla\psi\right\Vert +\left\vert
\nabla^{2}\psi\right\vert \right)  +4c_{0}+\frac{f^{\prime\prime}}{f^{\prime
2}}\frac{1+b}{B}\left\Vert \nabla w\right\Vert ^{2}.
\]

With Condition II\ from the strong decay we finally get%
\[
\frac{4}{B}Q_{J}\left[  w\right]  \leq C+\frac{f^{\prime\prime}}{f^{\prime2}%
}\varphi\left(  \frac{4f^{\prime}}{3}\right)
\]
where the constant $C$ depends only on $m$ and
\[
\sup_{\Lambda_{\delta_{0}}}\left(  \left\Vert \nabla\psi\right\Vert
+\left\vert \nabla^{2}\psi\right\vert +\left\Vert J\right\Vert +\left\vert
\nabla^{2}d\right\vert \right)  .
\]
This leads to a supersolution as in (\cite{RT1} pp 18 - 19). We then have

\begin{proposition}
\label{bgem}Let $\Lambda$ be a bounded domain of class $C^{2}$ in $N$ and let
$\delta_{0}>0$ be such that the distance $d\left(  x,\partial\Lambda\right)
,$ $x\in\overline{\Lambda},$ is $C^{2}$ in the strip%
\[
\overline{\Lambda}_{\delta_{0}}:=\left\{  x\in\overline{\Lambda}\text{
$\vert$
}d(x)\leq\delta_{0}\right\}  .
\]
Let $u\in C^{1}\left(  \overline{\Lambda}\right)  \cap C^{2}\left(
\Lambda\right)  $ be solution of (\ref{pdej}) with $\psi\in C^{2}\left(
\overline{\Lambda}\right)  .$ We assume that either Condition I or II of
Section \ref{intro} are satisfied and in case that Condition II holds we
require furthermore the existence of $0<\delta\leq\delta_{0}$ such that the
mean curvature $H_{\partial\Lambda_{d}}$ of $\partial\Lambda_{d}$ with respect
to the interior normal vector field $\eta_{d}$ of $\partial\Lambda_{d},$
$0<d\leq\delta$ satisfies
\begin{equation}
H_{\partial\Lambda_{d}}\geq-\left\langle J,\eta_{d}\right\rangle . \label{s}%
\end{equation}
Then the normal derivative of $u$ on $\partial\Lambda$ can be estimated by a
constant depending only on $\left\vert \psi\right\vert _{C_{2}\left(
\Omega\right)  },$ $\left\vert \nabla^{2}d\right\vert $ and $\left\Vert
J\right\Vert $.
\end{proposition}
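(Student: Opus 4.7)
The plan is to implement the standard boundary-barrier method, building directly on the pointwise inequalities already derived in the text. For each point of $\partial\Lambda$ I would construct a supersolution of (\ref{pdej}) of the form $w^{+}=\psi+f(d)$, and a symmetric subsolution $w^{-}=\psi-f(d)$, in the strip $\overline{\Lambda}_{\delta}$, where $f\in C^{2}([0,\delta])$ satisfies $f(0)=0$, $f''\leq 0$ and $f'\geq\alpha:=\max\{1,3c_{1}\}$ with $c_{1}$ as in (\ref{cc1}). Under these structural constraints the estimates (\ref{c1})--(\ref{doister}) are in force, and the preceding computation reduces the inequality $Q_{J}[w^{+}]\leq 0$ to a scalar ODE inequality of the form $C+(f''/f'^{3})\,\varphi(\tfrac{2}{3}f')\leq 0$ in the MDER case, or $C+(f''/f'^{2})\,\varphi(\tfrac{4}{3}f')\leq 0$ in the SDER case, where $\varphi$ is the lower bound $g$ for $(1+b^{-})s^{2}$ supplied by Condition I or II, and $C$ is a constant controlled by $m$, $\|\psi\|_{C^{2}}$, $\|J\|$ and $\sup|\nabla^{2}d|$. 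In the SDER case the reduction uses the mean-convexity assumption (\ref{s}) to guarantee $\Delta d-\langle\nabla d,J\rangle\leq 0$, which is what turns $f'\mathcal{L}_{w}d$ into a harmless lower-order term.

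The second task is to solve these ODE inequalities quantitatively. The divergence conditions built into Conditions I and II are precisely what is needed to produce a concave, monotone solution $f\in C^{2}([0,\delta])$ with $f(0)=0$, $f(\delta)\geq 2\sup_{\partial\Lambda}|\psi|$, and $f'(0)$ bounded a priori by a constant depending only on the data listed in the proposition. This is exactly the ODE analysis carried out in detail in \cite{RT}, pp.\,16--19, for the case $J=0$; since the presence of $J$ in (\ref{pdej}) contributes only additively to the constant $C$, its structure is unaffected and the same argument transplants directly.

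With such an $f$ in hand, $w^{+}\in C^{2}(\overline{\Lambda}_{\delta})$ is a classical supersolution that agrees with $u=\psi$ on $\partial\Lambda$ and satisfies
\[
w^{+}\big|_{\{d=\delta\}}=\psi+f(\delta)\geq -\sup|\psi|+2\sup|\psi|\geq\sup_{\Lambda}|u|\geq u,
\]
using the $L^{\infty}$ bound $\sup_{\Lambda}|u|\leq\sup_{\partial\Lambda}|\psi|$ noted in Section \ref{thepde}. The comparison principle for (\ref{pdej}) discussed there then forces $u\leq w^{+}$ throughout $\Lambda_{\delta}$; since equality holds on $\partial\Lambda$ and $\nabla d=\nu$ there, taking interior normal derivatives yields $\partial_{\nu}u\leq\partial_{\nu}\psi+f'(0)$. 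The analogous construction with $w^{-}$ produces the matching lower bound, so that the full gradient $|\nabla u|$ on $\partial\Lambda$ is controlled by $\|\nabla\psi\|_{\infty}+f'(0)$, a constant with the dependence claimed.

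The main obstacle is the quantitative ODE step just alluded to: extracting from the divergence in Conditions I and II an \emph{effective} a priori bound on $f'(0)$ in terms solely of $\|\psi\|_{C^{2}(\overline{\Lambda})}$, $\sup|\nabla^{2}d|$ and $\sup\|J\|$, independent of the particular solution $u$, while still forcing $f(\delta)\geq 2\sup|\psi|$. This is the heart of the computation in \cite{RT}; once it is granted, the adaptation to (\ref{pdej}) is essentially bookkeeping, the only new ingredient being the mean-convexity hypothesis (\ref{s}), which in the SDER regime neutralizes the term $f'\mathcal{L}_{w}d$ that would otherwise grow linearly in $f'$ and destroy the ODE reduction.
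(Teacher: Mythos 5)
Your proposal reproduces the paper's own proof essentially verbatim: the barrier $w=\psi+f(d)$ with $f(0)=0$, $f''\le 0$, $f'\ge\max\{1,3c_{1}\}$, the reduction of $Q_{J}[w]\le 0$ to the same two ODE inequalities via Conditions I and II, the correct translation of hypothesis (\ref{s}) into $\Delta d-\langle\nabla d,J\rangle\le 0$ (since $\Delta d=-H_{\partial\Lambda_{d}}$ and $\nabla d=\eta_{d}$), and the delegation of the quantitative ODE construction of $f$ to \cite{RT}, pp.\ 16--19, exactly as the paper does. The final comparison-principle step you spell out (matching sub- and supersolutions, values at $\{d=\delta\}$ controlled by the $L^{\infty}$ bound, normal derivatives compared at $\partial\Lambda$) is the standard conclusion the paper leaves implicit, so the proposal is correct and takes the same route.
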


\subsection{Local and global gradient estimates\label{C}}

\qquad In this section we obtain local gradient estimates of solutions of
(\ref{PDE3}) and use Proposition \ref{bgem} to obtain global gradient
estimates of solutions of (\ref{pdej}) with smooth boundary data.

Let $u$ be a solution of (\ref{PDE3}) of $C^{3}$ class. We obtain an equation
for $\left\Vert \nabla u\right\Vert $ by differentiating (\ref{PDE3}) in
direction $\nabla u.$

\begin{lemma}
\label{pdegrad}If $u\in C^{3}\left(  \Lambda\right)  $ solves (\ref{PDE3})
then, in an orthonormal frame $E_{1},...,E_{m}$ with $E_{1}=\left\Vert \nabla
u\right\Vert ^{-1}\nabla u$ on a neighborhood of $\Lambda$ where $\nabla u$ is
non zero$,$ $m=\dim N,$ the following equality holds%
\begin{gather*}
\left(  b+1\right)  \left\Vert \nabla u\right\Vert \nabla^{2}\left\Vert \nabla
u\right\Vert \left(  E_{1},E_{1}\right)  +\left\Vert \nabla u\right\Vert
\sum_{i=2}^{m}\nabla^{2}\left\Vert \nabla u\right\Vert \left(  E_{i}%
,E_{i}\right) \\
+b^{\prime}\left\Vert \nabla u\right\Vert \nabla^{2}u\left(  E_{1}%
,E_{1}\right)  ^{2}+b\sum_{i=2}^{m}\nabla^{2}u\left(  E_{1},E_{i}\right)
^{2}-\sum_{i=1,j=2}^{m}\nabla^{2}u\left(  E_{i},E_{j}\right)  ^{2}\\
-\operatorname*{Ric}\left(  \nabla u,\nabla u\right)  -\left\Vert \nabla
u\right\Vert \nabla^{2}u\left(  E_{1},J\right)  -\left\Vert \nabla
u\right\Vert ^{2}\left\langle E_{1},\nabla_{E_{1}}J\right\rangle =0
\end{gather*}
where $\operatorname*{Ric}$ is the Ricci tensor of $N.$
\end{lemma}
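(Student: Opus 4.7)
The plan is to apply the derivation $\nabla u$ to both sides of the PDE $Q_{J}[u]=0$, treat the three summands of $Q_{J}$ separately, and then expand every term in the adapted orthonormal frame $E_{1},\ldots,E_{m}$ with $E_{1}=\phi^{-1}\nabla u$, where $\phi:=\|\nabla u\|$. The useful auxiliary identities throughout are $E_{i}\phi=\nabla^{2}u(E_{1},E_{i})$ and $\nabla u(\,\cdot\,)=\phi\,E_{1}(\,\cdot\,)$.

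For the Laplacian term $\nabla u(\Delta u)=\langle\nabla u,\nabla\Delta u\rangle$ I would invoke Bochner's formula
\[
\tfrac{1}{2}\Delta\phi^{2}=|\nabla^{2}u|^{2}+\langle\nabla u,\nabla\Delta u\rangle+\operatorname*{Ric}(\nabla u,\nabla u).
\]
Rewriting $\tfrac{1}{2}\Delta\phi^{2}=\phi\Delta\phi+|\nabla\phi|^{2}$ and plugging in $E_{i}\phi=\nabla^{2}u(E_{1},E_{i})$, the combination $|\nabla\phi|^{2}-|\nabla^{2}u|^{2}$ collapses: the $(1,1)$ Hessian entries cancel, and the symmetry of $\nabla^{2}u$ collects the remaining squares into exactly $-\sum_{i=1,\,j\geq 2}\nabla^{2}u(E_{i},E_{j})^{2}$. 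This single step is already responsible for the $\phi\sum_{i=2}^{m}\nabla^{2}\phi(E_{i},E_{i})$ summand, one copy of $\phi\nabla^{2}\phi(E_{1},E_{1})$, the Ricci term, and the off-diagonal Hessian-squared term in the claim.

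For the nonlinear term $b(\phi)\nabla^{2}u(E_{1},E_{1})=b(\phi)E_{1}\phi$, applying $\nabla u=\phi E_{1}$ and the product rule yields $\phi b'(\phi)(E_{1}\phi)^{2}+\phi b(\phi)E_{1}E_{1}\phi$; the first piece is precisely $\phi b'\nabla^{2}u(E_{1},E_{1})^{2}$. For the second, I would write $E_{1}E_{1}\phi=\nabla^{2}\phi(E_{1},E_{1})+(\nabla_{E_{1}}E_{1})\phi$ and observe that, because $E_{1}=\phi^{-1}\nabla u$,
\[
\nabla_{E_{1}}E_{1}=-\phi^{-2}(E_{1}\phi)\nabla u+\phi^{-1}\nabla_{E_{1}}\nabla u=\phi^{-1}\sum_{j\geq 2}\nabla^{2}u(E_{1},E_{j})\,E_{j},
\]
so $(\nabla_{E_{1}}E_{1})\phi=\phi^{-1}\sum_{j\geq 2}\nabla^{2}u(E_{1},E_{j})^{2}$. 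Combined with the $\phi\nabla^{2}\phi(E_{1},E_{1})$ coming from the Laplacian, this promotes the coefficient in front of $\phi\nabla^{2}\phi(E_{1},E_{1})$ to $(b+1)$ and produces the remaining $b\sum_{j\geq 2}\nabla^{2}u(E_{1},E_{j})^{2}$ term.

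For the drift, the product rule gives $\nabla u\langle\nabla u,J\rangle=\langle\nabla_{\nabla u}\nabla u,J\rangle+\langle\nabla u,\nabla_{\nabla u}J\rangle=\phi\nabla^{2}u(E_{1},J)+\phi^{2}\langle E_{1},\nabla_{E_{1}}J\rangle$, contributing the final two terms with the correct sign. Adding the three pieces of $\nabla u(Q_{J}[u])=0$ delivers the stated identity. I expect the only genuine obstacle to be the $E_{1}E_{1}\phi$ step: the frame vector $E_{1}$ is not parallel but is tied to $u$ through $\nabla u$, and it is the off-diagonal Hessian data hidden in $\nabla_{E_{1}}E_{1}$ that supplies the $b\sum_{j\geq 2}\nabla^{2}u(E_{1},E_{j})^{2}$ correction; careful bookkeeping of this term, together with sorting the Hessian-squared contributions by indices, is where the calculation is most error-prone.
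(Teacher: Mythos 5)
Your proposal is correct and follows essentially the same route as the paper: differentiate the equation in the direction $\nabla u$, invoke Bochner's formula, and expand everything in the adapted frame $E_{1}=\left\Vert \nabla u\right\Vert ^{-1}\nabla u$, with the drift term handled by the same product-rule expansion of $\nabla u\left\langle \nabla u,J\right\rangle $. The only difference is bookkeeping: the paper differentiates the equation multiplied by $\left\Vert \nabla u\right\Vert ^{2}$ and works with Hessians of $\left\Vert \nabla u\right\Vert ^{2}$ (importing the identities of Lemma 3.5 of \cite{RT}, then converting to $\left\Vert \nabla u\right\Vert $ at the end), whereas you carry $\left\Vert \nabla u\right\Vert $ throughout and recover the cross term $b\sum_{i\geq2}\nabla^{2}u\left(  E_{1},E_{i}\right)  ^{2}$ from the correctly computed $\left(  \nabla_{E_{1}}E_{1}\right)  \left\Vert \nabla u\right\Vert $ --- an equivalent calculation.
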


\begin{proof}
We write (\ref{PDE3}) in the equivalent form%
\[
\left\Vert \nabla u\right\Vert ^{2}\Delta u+b\left(  \left\Vert \nabla
u\right\Vert \right)  \nabla^{2}u\left(  \nabla u,\nabla u\right)  -\left\Vert
\nabla u\right\Vert ^{2}\left\langle \nabla u,J\right\rangle =0,
\]
differentiate this equation in direction $\nabla u$ and afterwards divide the
result by $\left\Vert \nabla u\right\Vert ^{2}.$ Also using the relation%
\[
\nabla^{2}u\left(  \nabla u,\nabla u\right)  =\frac{1}{2}\left\langle
\nabla\left\Vert \nabla u\right\Vert ^{2},\nabla u\right\rangle
\]
and Bochner's formula%
\[
\left\langle \nabla\Delta u,\nabla u\right\rangle =\frac{1}{2}\Delta\left\Vert
\nabla u\right\Vert ^{2}-\left\vert \nabla^{2}u\right\vert ^{2}%
-\operatorname*{Ric}\left(  \nabla u,\nabla u\right)
\]
we thus get%
\begin{gather*}
\frac{1}{2}\Delta\left\Vert \nabla u\right\Vert ^{2}-\left\vert \nabla
^{2}u\right\vert ^{2}-\operatorname*{Ric}\left(  \nabla u,\nabla u\right)
+\left(  b^{\prime}\left\Vert \nabla u\right\Vert -2b\right)  \left\Vert
\nabla u\right\Vert ^{-4}\nabla^{2}u\left(  \nabla u,\nabla u\right)  ^{2}\\
+2\left\Vert \nabla u\right\Vert ^{-2}\nabla^{2}u\left(  \nabla u,\nabla
u\right)  ^{2}\left\langle \nabla u,J\right\rangle +\frac{1}{2}b\left\Vert
\nabla u\right\Vert ^{-2}\nabla u\left\langle \nabla\left\Vert \nabla
u\right\Vert ^{2},\nabla u\right\rangle \\
-\left\Vert \nabla u\right\Vert ^{-2}\nabla u\left(  \left\Vert \nabla
u\right\Vert ^{2}\left\langle \nabla u,J\right\rangle \right)  =0.
\end{gather*}

As in \cite{RT1}, Lemma 3.5, we get for the last two terms%
\[
\nabla u\left\langle \nabla\left\Vert \nabla u\right\Vert ^{2},\nabla
u\right\rangle =\left\Vert \nabla u\right\Vert ^{2}\left[  \nabla
^{2}\left\Vert \nabla u\right\Vert ^{2}\left(  E_{1},E_{1}\right)
+2\sum_{i=1}^{m}\nabla^{2}u\left(  E_{1},E_{i}\right)  ^{2}\right]
\]
and%
\begin{align*}
\nabla u\left(  \left\Vert \nabla u\right\Vert ^{2}\left\langle \nabla
u,J\right\rangle \right)   &  =\left\langle \nabla\left\Vert \nabla
u\right\Vert ,\nabla u\right\rangle \left\langle \nabla u,J\right\rangle
+\left\Vert \nabla u\right\Vert ^{2}\nabla u\left\langle \nabla
u,J\right\rangle \\
&  =2\nabla^{2}u\left(  \nabla u,\nabla u\right)  \left\langle \nabla
u,J\right\rangle \\
&  +\left\Vert \nabla u\right\Vert ^{2}\left(  \left\langle \nabla_{\nabla
u}\nabla u,J\right\rangle +\left\langle \nabla_{\nabla u}J,\nabla
u\right\rangle \right) \\
&  =\left\Vert \nabla u\right\Vert ^{2}\left[  2\nabla^{2}u\left(  E_{1}%
,E_{1}\right)  \left\langle \nabla u,J\right\rangle +\nabla^{2}u\left(  \nabla
u,J\right)  \right. \\
&  \left.  +\left\Vert \nabla u\right\Vert ^{2}\left\langle \nabla_{E_{1}%
}J,E_{1}\right\rangle \right]  .
\end{align*}

This results in
\begin{gather*}
\frac{1}{2}\left(  b+1\right)  \nabla^{2}\left\Vert \nabla u\right\Vert
^{2}\left(  E_{1},E_{1}\right)  +\frac{1}{2}\sum_{i=2}^{m}\nabla^{2}\left\Vert
\nabla u\right\Vert ^{2}\left(  E_{i},E_{i}\right) \\
+\left(  b^{\prime}\left\Vert \nabla u\right\Vert -b\right)  \nabla
^{2}u\left(  E_{1},E_{1}\right)  ^{2}+b\sum_{i=2}^{m}\nabla^{2}u\left(
E_{1},E_{i}\right)  ^{2}-\sum_{i,j=1}^{m}\nabla^{2}u\left(  E_{i}%
,E_{j}\right)  ^{2}\\
-\operatorname*{Ric}\left(  \nabla u,\nabla u\right)  -\left\Vert \nabla
u\right\Vert \nabla^{2}u\left(  E_{1},J\right)  -\left\Vert \nabla
u\right\Vert ^{2}\left\langle E_{1},\nabla_{E_{1}}J\right\rangle =0.
\end{gather*}

Using finally the relation
\[
\frac{1}{2}\nabla^{2}\left\Vert \nabla u\right\Vert ^{2}\left(  E_{i}%
,E_{i}\right)  =\left\Vert \nabla u\right\Vert \nabla^{2}\left\Vert \nabla
u\right\Vert \left(  E_{i},E_{i}\right)  +\left(  \nabla^{2}u\left(
E_{1},E_{i}\right)  \right)  ^{2},\text{ \ }1\leq i\leq m,
\]
to convert the last equation into one for $\left\Vert \nabla u\right\Vert $
instead of $\left\Vert \nabla u\right\Vert ^{2}$ we arrive at the equation in
Lemma \ref{pdegrad}.
\end{proof}

\begin{lemma}
\label{lem2} If $u\in C^{3}\left(  \Lambda\right)  $ solves (\ref{PDE3}) and
the function $G(x)=g(x)f(u)F(\left\Vert \nabla u\right\Vert )$ attains a local
maximum in an interior point $y_{0}$ of $\Lambda$ with $\nabla u(y_{0})\neq0$
then, in terms of a local orthonormal basis $E_{1}:=\left\Vert \nabla
u\right\Vert ^{-1}\nabla u,E_{2},\ldots,E_{m}$ of $T_{y_{0}}N$ we obtain, at
$y_{0},$ the relations
\begin{equation}
\frac{F^{\prime}}{F}\nabla^{2}u\left(  E_{1},E_{i}\right)  =-\frac{1}%
{g}\langle\nabla g,E_{i}\rangle-\frac{f^{\prime}}{f}\langle\nabla
u,E_{i}\rangle\label{fe}%
\end{equation}
and
\begin{align*}
0  &  \geq\left[  -\frac{F^{\prime}b^{\prime}}{F}+(b+1)\left(  \frac
{F^{\prime\prime}}{F}-\frac{F^{\prime2}}{F^{2}}\right)  \right]  \nabla
^{2}u(E_{1},E_{1})^{2}+\frac{F^{\prime}}{F\left\Vert \nabla u\right\Vert }%
\sum_{\substack{i,j\\i\geq2}}\nabla^{2}u(E_{i},E_{j})^{2}\\
&  +\left[  -\frac{F^{\prime}b}{F\left\vert \nabla u\right\vert }%
+\frac{F^{\prime\prime}}{F}-\frac{F^{\prime2}}{F^{2}}\right]  \sum_{i\geq
2}\nabla^{2}u(E_{1},E_{i})^{2}+(b+1)\left(  \frac{f^{\prime\prime}}{f}%
-\frac{f^{\prime2}}{f^{2}}\right)  \left\Vert \nabla u\right\Vert ^{2}\\
&  +\frac{\left\Vert \nabla u\right\Vert F^{\prime}}{F}\left[
\operatorname*{Ric}(E_{1},E_{1})+\langle\nabla_{E_{1}}J,E_{1}\rangle\right] \\
&  +\frac{1}{g}\left[  (b+1)\nabla^{2}g(E_{1},E_{1})+\sum_{i\geq2}\nabla
^{2}g(E_{i},E_{i})\right] \\
&  -\frac{1}{g}\left[  \sum_{i=1}^{n}\langle\nabla g,E_{i}\rangle\langle
J,E_{i}\rangle\right]  -\frac{1}{g^{2}}\left[  (b+1)\langle\nabla
g,E_{1}\rangle^{2}+\sum_{i\geq2}\langle\nabla g,E_{i}\rangle^{2}\right]  .
\end{align*}

\end{lemma}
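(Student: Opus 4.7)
The plan is to convert the interior maximum of $G$ at $y_0$ into an inequality for a second-order operator naturally adapted to the PDE (\ref{PDE3}). Define, at $y_0$,
$$\mathcal{L}v := (b+1)\,\nabla^{2}v(E_{1},E_{1})+\sum_{i\ge 2}\nabla^{2}v(E_{i},E_{i}),$$
where $b=b(\|\nabla u(y_0)\|)$. Since $\|\nabla u(y_0)\|>0$, the hypotheses on $a$ give $b+1 = \|\nabla u\|\,a'/a>0$, so $\mathcal{L}$ is elliptic. Because $G>0$ has an interior maximum at $y_0$, so does $\log G$, whence
$$\nabla\log G(y_0)=0,\qquad \mathcal{L}(\log G)(y_0)\le 0.$$
This last inequality, suitably expanded, is the statement of the lemma.

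The first-order relation (\ref{fe}) is read off immediately from $\nabla\log G=\nabla\log g+(f'/f)\nabla u+(F'/F)\nabla\|\nabla u\|=0$ by pairing with $E_i$ and using the identity $E_i\|\nabla u\|=\nabla^{2}u(E_{1},E_{i})$, itself a consequence of $\nabla\|\nabla u\|^{2}=2\nabla^{2}u(\nabla u,\cdot)$.

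The bulk of the work is expanding $\mathcal{L}(\log g)+\mathcal{L}(\log f(u))+\mathcal{L}(\log F(\|\nabla u\|))\le 0$. The first summand is immediate and gives the two $g$-dependent contributions on the right-hand side. For $\log f(u)$ the chain rule yields $\nabla^{2}\log f(u)=(f''/f-f'^{2}/f^{2})\,\nabla u\otimes\nabla u+(f'/f)\,\nabla^{2}u$; applying $\mathcal{L}$ and using the PDE (\ref{PDE3}) in the form $(b+1)\nabla^{2}u(E_{1},E_{1})+\sum_{i\ge 2}\nabla^{2}u(E_{i},E_{i})=\langle\nabla u,J\rangle$ collapses it to $(b+1)(f''/f-f'^{2}/f^{2})\|\nabla u\|^{2}+(f'/f)\langle\nabla u,J\rangle$. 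For $\log F(\|\nabla u\|)$ the analogous chain rule, together with $E_{i}\|\nabla u\|=\nabla^{2}u(E_{1},E_{i})$, produces the $(F''/F-F'^{2}/F^{2})$ quadratic terms and a remaining $(F'/F)\mathcal{L}\|\nabla u\|$; the latter is eliminated by dividing Lemma \ref{pdegrad} by $\|\nabla u\|$, which contributes exactly the $b'$, $\mathrm{Ric}(E_{1},E_{1})+\langle\nabla_{E_{1}}J,E_{1}\rangle$, off-diagonal Hessian, and $\nabla^{2}u(E_{1},J)$ terms, with the index set $\{i=1,\,j\ge 2\}$ of Lemma \ref{pdegrad} converted to $\{i\ge 2,\,j\}$ by symmetry $\nabla^{2}u(E_{i},E_{j})=\nabla^{2}u(E_{j},E_{i})$.

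Collecting everything reproduces the claimed inequality except for a residual $(f'/f)\langle\nabla u,J\rangle+(F'/F)\nabla^{2}u(E_{1},J)$. This is absorbed by applying (\ref{fe}) a second time: multiplying by $\langle J,E_{i}\rangle$ and summing over $i$, using $\sum_{i}\langle J,E_{i}\rangle\nabla^{2}u(E_{1},E_{i})=\nabla^{2}u(E_{1},J)$ and $\sum_{i}\langle J,E_{i}\rangle\langle\nabla u,E_{i}\rangle=\langle\nabla u,J\rangle$, one obtains
$$\frac{f'}{f}\langle\nabla u,J\rangle+\frac{F'}{F}\nabla^{2}u(E_{1},J)=-\frac{1}{g}\sum_{i=1}^{m}\langle\nabla g,E_{i}\rangle\langle J,E_{i}\rangle,$$
which is the last missing term. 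The principal obstacle is the bookkeeping in the $\log F$ step: Lemma \ref{pdegrad} must be inserted with the correct signs, and the chain-rule cross terms carefully matched to the coefficients of $\nabla^{2}u(E_{1},E_{1})^{2}$ and $\sum_{i\ge 2}\nabla^{2}u(E_{1},E_{i})^{2}$ in the statement.
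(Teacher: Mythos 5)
Your proposal is correct and follows essentially the same route as the paper: both derive (\ref{fe}) from $\nabla\ln G(y_0)=0$, both exploit nonpositivity of the elliptic combination $(b+1)\nabla^{2}\ln G(E_{1},E_{1})+\sum_{i\geq2}\nabla^{2}\ln G(E_{i},E_{i})$ (your $\mathcal{L}(\log G)$, the paper's $\theta$), both eliminate $\nabla^{2}\left\Vert \nabla u\right\Vert$ via Lemma \ref{pdegrad}, and both cancel the residual $J$-terms by contracting (\ref{fe}) against $J$, yielding exactly the identity $\frac{f^{\prime}}{f}\langle\nabla u,J\rangle+\frac{F^{\prime}}{F}\nabla^{2}u(E_{1},J)=-\frac{1}{g}\sum_{i}\langle\nabla g,E_{i}\rangle\langle J,E_{i}\rangle$ that the paper states in rearranged form.
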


\begin{proof}
The vanishing of
\[
\nabla\ln G=\frac{1}{g}\nabla g+\frac{f^{\prime}}{f}\nabla u+\frac{F^{\prime}%
}{F}\nabla\left\Vert \nabla u\right\Vert
\]
at $y_{0}$ gives (\ref{fe}), see \cite{RT1}, Lemma 3.6 for details. Since
(\ref{PDE3}) is elliptic and the matrix $\nabla^{2}\ln G\left(  E_{i}%
,E_{j}\right)  $ is nonpositive at $y_{0}$ the expression%
\[
\theta:=\left(  b+1\right)  \nabla^{2}\ln G\left(  E_{1},E_{1}\right)
+\sum_{i\geq2}\nabla^{2}\ln G\left(  E_{i},E_{i}\right)
\]
will be non positive at $y_{0}.$ The evaluation of $\theta$ at $y_{0}$ is now
essentially the same as in \cite{RT1}, Lemma 3.6, the only difference being
the additional term involving $J.$ This gives%
\begin{align*}
\theta &  =\frac{F^{\prime}}{F\left\Vert \nabla u\right\Vert |}\left\{
\left\Vert \nabla u\right\Vert (b+1)\nabla^{2}\left\Vert \nabla u\right\Vert
(E_{1},E_{1})+\left\Vert \nabla u\right\Vert \sum_{i\geq2}\nabla^{2}\left\Vert
\nabla u\right\Vert \left(  E_{i},E_{i}\right)  \right\} \\
&  +(b+1)\left(  \frac{F^{\prime\prime}}{F}-\frac{F^{\prime2}}{F^{2}}\right)
\nabla^{2}u(E_{1},E_{1})^{2}+\left(  \frac{F^{\prime\prime}}{F}-\frac
{F^{\prime2}}{F^{2}}\right)  \sum_{i\geq2}\nabla^{2}u(E_{1},E_{i})^{2}\\
&  +(b+1)\left(  \frac{f^{\prime\prime}}{f}-\frac{f^{\prime2}}{f^{2}}\right)
\left\Vert \nabla u\right\Vert ^{2}+\frac{1}{g}\left[  (b+1)\nabla^{2}%
g(E_{1},E_{1})+\sum_{i\geq2}\nabla^{2}g(E_{i},E_{i})\right] \\
&  -\frac{1}{g^{2}}\left[  (b+1)\langle\nabla g,E_{1}\rangle^{2}+\sum_{i\geq
2}\langle\nabla g,E_{i}\rangle^{2}\right]  +\frac{f^{\prime}}{f}\left\Vert
\nabla u\right\Vert \left\langle E_{1},J\right\rangle .
\end{align*}

We now employ Lemma \ref{pdegrad} to eliminate the second derivatives of
$\left\Vert \nabla u\right\Vert .$ Moreover, the relation%
\[
\nabla^{2}u\left(  J,E_{1}\right)  =-\frac{F}{F^{\prime}}\frac{f^{\prime}}%
{f}\left\Vert \nabla u\right\Vert \left\langle E_{1},J\right\rangle -\frac
{F}{F^{\prime}}\frac{1}{g}\sum_{i=1}^{n}\left\langle \nabla g,E_{i}%
\right\rangle \left\langle J,E_{i}\right\rangle ,
\]
which follows from (\ref{fe}), leads to a cancellation of the term
\[
\frac{f^{\prime}}{f}\left\Vert \nabla u\right\Vert \left\langle E_{1}%
,J\right\rangle
\]
in the above expression for $\theta.$ Rearranging terms immediately gives the
statement of the lemma.
\end{proof}

We now realize that Lemma \ref{lem2} coincides with Lemma 3.6 in \cite{RT1}
with only the following modifications: $\operatorname*{Ric}\left(  E_{1}%
,E_{1}\right)  $ has to be replaced by $\operatorname*{Ric}\left(  E_{1}%
,E_{1}\right)  +\left\langle \nabla_{E_{1}}J,E_{1}\right\rangle $ and the
additional term%
\[
\frac{1}{g}\sum_{i=1}^{m}\left\langle \nabla g,E_{i}\right\rangle \left\langle
J,E_{i}\right\rangle
\]
appears. These terms do not depend on $u$ and have the same structure as the
other terms involving $g.$ Hence, the complete analysis following Lemma 3.6 in
\cite{RT1} applies to the present situation and the global and local gradient
estimates for the solutions of (\ref{PDE3}) hold under the same conditions as
in \cite{RT1} (see Theorems 3.8, 3.10, 3.12 and 3.14 in \cite{RT1}). Of
course, these gradient bounds now also depend on $\left\Vert J\right\Vert $
and $\left\Vert \nabla J\right\Vert $. We will need, in particular, local
gradient estimates for the minimal surface equation in the second example of
Section \ref{last}.

\subsection{Proofs of Theorems \ref{mder1} and \ref{sder1}: conclusion}

\qquad Theorems \ref{mder1} is consequence of Proposition \ref{ab}, and the
local and global gradient estimates of Section \ref{C}. Theorem \ref{sder1}
follows from Proposition \ref{ab}, Section \ref{C} and Proposition \ref{bgem}.
As to this last proposition we note that the mean convexity in $M$ of a
$G-$invariant domain $\Omega$ is equivalent to the condition
\begin{equation}
H_{\partial\Lambda}\geq-\left\langle J,\eta\right\rangle \label{cb}%
\end{equation}
where $\Lambda=\pi\left(  \Omega\right)  .$ Indeed: at $\partial\Omega$ we
have
\[
\left\langle \nabla d,J\right\rangle \circ\pi=\left\langle \eta,J\right\rangle
\circ\pi=\left\langle \overline{\eta},\overline{J}\right\rangle =\left\langle
\nu,\overrightarrow{H}_{G}\right\rangle
\]
and the claim then follows from Lemma \ref{hahb}. As in this lemma, $\eta$
denotes the inner unit normal of $\Lambda.$

As already remarked, Theorem \ref{mder1} and Theorem \ref{sder1} are initially
proved for regular equations ($p=2$) and then carried over to the nonregular
case by the perturbation method presented in \cite{RT1}.

\section{Two examples with the minimal surface PDE\label{last}}

1) \textbf{The DP for the minimal surface equation on unbounded helicoidal
domains of} $\mathbb{R}^{3}$

\qquad Given $\lambda\geq0$ let $G_{\lambda}=\left\{  h_{t}\right\}
_{t\in\mathbb{R}}$ be the helicoidal group of isometries of $\mathbb{R}^{3}$
acting as
\[
h_{t}\left(  x,y,z\right)  =\left(  \left(
\begin{array}
[c]{cc}%
\cos\left(  \lambda t\right)  & \sin\left(  \lambda t\right) \\
-\sin\left(  \lambda t\right)  & \cos\left(  \lambda t\right)
\end{array}
\right)  \left(
\begin{array}
[c]{c}%
x\\
y
\end{array}
\right)  ,z+t\right)  ,\text{ }\left(  x,y,z\right)  \in\mathbb{R}^{3},\text{
}t\in\mathbb{R}.
\]

Let $\gamma:I\rightarrow\mathbb{R}^{2}=\left\{  z=0\right\}  $ be an arc
length curve and let $S$ be the surface generated by $\gamma$ under the action
of the helicoidal group. A parametrization of $S$ is given by
\begin{align*}
\varphi\left(  s,t\right)   &  =h_{t}\left(  \gamma\left(  s)\right)  \right)
\\
&  =\left(  x\left(  s\right)  \cos\left(  \lambda t\right)  +y\left(
s\right)  \sin\left(  \lambda t\right)  ,-x\left(  s\right)  \sin\left(
\lambda t\right)  +y\left(  s\right)  \cos\left(  \lambda t\right)  ,t\right)
\end{align*}
$t\in\mathbb{R}$, $s\in I.$ We have that $S$ is mean convex if and only if
\begin{equation}
\kappa\left(  \lambda^{2}r^{2}+1\right)  +\lambda^{2}\left(  yx^{\prime
}-xy^{\prime}\right)  \geq0 \label{e1}%
\end{equation}
where $\kappa$ is the curvature of $\gamma$ and $r=\sqrt{x^{2}+y^{2}}.$ A
sufficient condition for (\ref{e1}) is%
\[
\kappa\geq\frac{\lambda^{2}r}{\lambda^{2}r^{2}+1}.
\]

\begin{corollary}
\label{co}Consider a bounded convex $C^{2,\alpha}$ domain $\Lambda$ in
$\mathbb{R}^{2}=\left\{  z=0\right\}  \subset\mathbb{R}^{3}$ and let $\psi\in
C^{0}\left(  \partial\Lambda\right)  $ be given$.$ Set $\Omega=G_{\lambda
}\left(  \Lambda\right)  $ and let $\varphi\in C^{0}\left(  \partial
\Omega\right)  $ be defined by $\varphi=\psi\circ\pi.$ Assume that $\Omega$ is
mean convex. Then the Dirichlet problem%
\[
\left\{
\begin{array}
[c]{l}%
\operatorname{div}\frac{\nabla u}{\sqrt{1+\left\Vert \nabla u\right\Vert ^{2}%
}}=0\text{ in }\Omega\\
u|\partial\Omega=\varphi
\end{array}
\right.
\]
has a unique $G_{\lambda}-$invariant solution. A sufficient condition for the
mean convexity of $\Omega$ is
\[
\kappa\geq\frac{\lambda^{2}r}{\lambda^{2}r^{2}+1}%
\]
where $\kappa$ is the curvature of $\partial\Lambda$ and $r$ the distance
function to $\left(  0,0,0\right)  $ restricted to $\partial\Lambda.$
\end{corollary}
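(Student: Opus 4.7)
The plan is to derive Corollary \ref{co} as a direct application of Theorem \ref{sder1} with $M=\mathbb{R}^{3}$ and $G=G_{\lambda}$, together with a density argument to pass from $C^{2,\alpha}$ to $C^{0}$ boundary data.

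First I would verify the abstract hypotheses. The group $G_{\lambda}$ evidently acts freely and properly by isometries on $\mathbb{R}^{3}$. For the minimal surface equation, $a(s)=s/\sqrt{1+s^{2}}$ gives $b(s)+1=1/(1+s^{2})$, hence $b^{-}=b$ and $(1+b^{-}(s))s^{2}=s^{2}/(1+s^{2})$ is bounded below, for $s\geq s_{0}$, by the constant $g\equiv s_{0}^{2}/(1+s_{0}^{2})$, which satisfies $\int_{s_{0}}^{\infty}g(s)/s\,ds=\infty$; this establishes Condition~II. A short calculation then gives $(-b'(s)s-(b(s)+1))s^{2}=s^{2}(s^{2}-1)/(1+s^{2})^{2}\to 1$ as $s\to\infty$, so Condition~IV holds for $s$ large.

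Second I would verify the geometric hypotheses. Since $\Lambda$ is bounded in $\mathbb{R}^{2}$, $\pi(\Omega)$ is bounded in $\mathbb{R}^{3}/G_{\lambda}$. A direct computation of the first and second fundamental forms of the parametrization $\varphi(s,t)=h_{t}(\gamma(s))$ produces that the non-normalized mean curvature of $\partial\Omega$ with respect to the inner normal is a positive multiple of $\kappa(\lambda^{2}r^{2}+1)+\lambda^{2}(yx'-xy')$, giving (\ref{e1}); the stated sufficient condition follows from the arc-length estimate $|yx'-xy'|\leq r\sqrt{x'^{2}+y'^{2}}=r$. Since $\mathbb{R}^{3}$ has vanishing Ricci curvature, the remark following Theorem \ref{sder1} guarantees that mean convexity of $\partial\Omega$ propagates to the nearby inner parallel hypersurfaces, so all the geometric hypotheses of Theorem \ref{sder1} are met.

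Third, since the corollary allows merely continuous boundary data, I would approximate $\psi$ uniformly on $\partial\Lambda$ by a sequence $\psi_{n}\in C^{2,\alpha}(\partial\Lambda)$. Theorem \ref{sder1} then furnishes $G_{\lambda}$-invariant solutions $u_{n}\in C^{1}(\overline{\Omega})$ with $u_{n}|\partial\Omega=\psi_{n}\circ\pi$, and the comparison principle for (\ref{pdej}) established in Section \ref{thepde} yields $\sup_{\Omega}|u_{n}-u_{m}|\leq\sup|\psi_{n}-\psi_{m}|$. Thus $u_{n}\to u$ uniformly on $\overline{\Omega}$, with $u|\partial\Omega=\psi\circ\pi$; the local gradient and interior Schauder estimates of Section \ref{C} upgrade this to $C^{2,\alpha}_{\mathrm{loc}}(\Omega)$-convergence, so $u$ solves the minimal surface equation classically in $\Omega$, and uniqueness follows from the same comparison principle. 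The only substantive calculation is the derivation of (\ref{e1}) from the helicoidal parametrization; the rest reduces to routine verification of hypotheses and a standard density argument.
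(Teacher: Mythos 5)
Your proposal is correct and follows essentially the same route as the paper: the authors give no separate proof text for Corollary \ref{co}, obtaining it as a direct application of Theorem \ref{sder1} together with the mean convexity criterion (\ref{e1}) and the bound $\left\vert yx^{\prime}-xy^{\prime}\right\vert \leq r$ established just before the statement, exactly as you do, and your verifications of Conditions II and IV for $a(s)=s/\sqrt{1+s^{2}}$ (i.e.\ $b(s)+1=1/(1+s^{2})$, so $\left(  -b^{\prime}(s)s-(b(s)+1)\right)  s^{2}=s^{2}(s^{2}-1)/(1+s^{2})^{2}\rightarrow1$) check out, as does the appeal to the flatness of $\mathbb{R}^{3}$ for the mean convexity of the inner parallel hypersurfaces. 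The only step the paper leaves implicit is the passage from $C^{2,\alpha}$ to merely continuous boundary data, and your uniform approximation combined with the comparison principle (applied in the bounded quotient domain $\pi(\Omega)$) and the interior estimates of Section \ref{C} is the standard and correct way to supply it.
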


\textbf{2) The asymptotic DP in the hyperbolic space for the minimal surface
equation with singularities at infinity}

In the half space model%
\[
\mathbb{R}_{+}^{n}=\left\{  x\in\mathbb{R}^{n}\text{
$\vert$
}x_{n}>0\right\}  ,\text{ }g_{ij}=\delta_{ij}/x_{n}^{2},
\]
for the hyperbolic space $\mathbb{H}^{n}$ consider the one parameter subgroup
of isometries
\[
\varphi_{t}\left(  x\right)  =e^{t}x,\text{ }t\in\mathbb{R},\text{ }%
x\in\mathbb{H}^{n}.
\]
In order to compute the mean curvature vector $\overrightarrow{H}$ of the
orbits we choose the arc length parametrization%
\[
\alpha\left(  s\right)  =e^{x_{n}s}x,\text{ }s\in\mathbb{R}\text{, }\left\vert
x\right\vert =1,
\]
where $\left\vert \text{ }\right\vert $ denotes the Euclidean norm. The
Christoffel symbols with respect to the standard basis of $\mathbb{R}^{n}$ are
given by
\begin{align*}
\Gamma_{ij}^{k}  &  =0\text{ for }i<n,\text{ }j<n,\text{ }k<n\\
\Gamma_{in}^{k}  &  =-\frac{1}{x_{n}}\delta_{ik}\text{ for }k<n\\
\Gamma_{ij}^{n}  &  =\frac{1}{x_{n}}\delta_{ij}\text{ for }\left(  i,j\right)
\neq\left(  n,n\right)  ,\text{ }\Gamma_{nn}^{n}=-\frac{1}{x_{n}}.
\end{align*}

The mean curvature vector%
\[
\overrightarrow{H}=\nabla_{\frac{d}{ds}}\alpha^{\prime}(s)=\left(
H_{1},...,H_{n}\right)
\]
with%
\[
H_{k}=\alpha_{k}^{\prime\prime}+\sum_{i,j}\Gamma_{ij}^{k}\alpha_{i}^{\prime
}\alpha_{j}^{\prime}%
\]
is then computed as%
\begin{equation}
\left\{
\begin{array}
[c]{l}%
H_{k}=-x_{n}^{2}e^{x_{n}s}x_{k},\text{ }1\leq k\leq n-1\\
H_{n}=x_{n}e^{x_{n}s}\sum_{i=1}^{n-1}x_{i}^{2}.
\end{array}
\right.  \label{h}%
\end{equation}

The map%
\[
x\longmapsto\left\vert x\right\vert ^{-1}x,\text{ }x\in\mathbb{H}^{n},
\]
is seen to be a Riemannian submersion from $\mathbb{H}^{n}$ onto the $\left(
n-1\right)  -$dimensional hyperbolic space%
\[
\mathcal{S}=\left\{  x\in\mathbb{H}^{n}\text{
$\vert$
}\left\vert x\right\vert =1\right\}
\]
with its induced metric. Hence $\mathcal{S}$ is an isometric model for
$\mathbb{H}^{n}/G,$ $G=\left\{  \varphi_{t}\right\}  _{t\in\mathbb{R}},$ and,
since the orbits are orthogonal to $\mathcal{S}$ it follows that
$\overrightarrow{H}$ is tangential to $\mathcal{S}$ and hence
$J=\overrightarrow{H}|\mathcal{S}$.

One immediately sees from (\ref{h}) that $\overrightarrow{H}$ is orthogonal to
the geodesic spheres of $\mathcal{S}$ centered at $o=\left(  0,...,0,1\right)
\in\mathcal{S}$ and points towards $o.$

Note that since equivalence classes of geodesic rays (classes of
directions)\ are preserved by isometries, $G$ acts continuously on
$\partial_{\infty}\mathbb{H}^{n}.$ Moreover, it has two fixed points,
$z_{n}=+\infty$ and $\left(  0,...,0\right)  .$ Let $P:\mathbb{H}%
^{n}\rightarrow\mathcal{S}$ be the projection%
\[
P\left(  p\right)  =G\left(  p\right)  \cap\mathcal{S}%
\]
where $G\left(  p\right)  $ is the orbit through $p.$ Note that $P$ extends
continuously to a map
\[
P:\partial_{\infty}\mathbb{H}^{n}\backslash\left\{  z_{n}=+\infty,\left(
0,...,0\right)  \right\}  \rightarrow\partial_{\infty}\mathcal{S}.
\]
Given $\phi\in C^{0}\left(  \partial_{\infty}\mathcal{S}\right)  $ define
$\psi\in C^{0}\left(  \partial_{\infty}\mathbb{H}^{n}\backslash\left\{
z_{n}=+\infty,\left(  0,...,0\right)  \right\}  \right)  $ by $\psi
=\varphi\circ P.$ With these notations and remarks, we have:

\begin{corollary}
\label{h1}There is one and only one $G-$invariant solution
\[
u\in C^{\infty}\left(  \mathbb{H}^{n}\right)  \cap C^{0}\left(  \partial
_{\infty}\overline{\mathbb{H}}^{n}\backslash\left\{  z_{n}=+\infty,\left(
0,...,0\right)  \right\}  \right)
\]
of the minimal surface equation on $\mathbb{H}^{n}$ such that
\[
u|\partial_{\infty}\mathbb{H}^{n}\backslash\left\{  z_{n}=+\infty,\left(
0,...,0\right)  \right\}  =\psi.
\]

\end{corollary}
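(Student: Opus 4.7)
The plan is to use Proposition \ref{ab} to reduce the problem to an asymptotic Dirichlet problem on the quotient $\mathcal{S}\cong\mathbb{H}^{n-1}$, solve that problem on an exhaustion by geodesic balls via the bounded-domain machinery of Section \ref{thepde}, and pin down the values at infinity by barriers; the last step is where the real work lies.

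A $G$-invariant function $u$ on $\mathbb{H}^n$ descends to $v$ on $\mathcal{S}$ with $u=v\circ\pi$, and by Proposition \ref{ab} such a $u$ solves the minimal surface equation iff $v$ solves (\ref{pdej}) on $\mathcal{S}$ with the vector field $J$ described before the statement. The prescribed boundary values $\psi=\phi\circ P$ at $\partial_\infty\mathbb{H}^n$ translate into $v=\phi$ on $\partial_\infty\mathcal{S}$. From (\ref{h}), a short calculation using $|x|=1$ gives $\|J\|(x)=\sqrt{1-x_n^2}=\tanh d_\mathcal{S}(x,o)$; in particular $\|J\|<1$ everywhere on $\mathcal{S}$ and $J$ is smooth with $\|\nabla J\|$ locally bounded, exactly what the gradient machinery of Section \ref{C} requires.

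Let $B_R\subset\mathcal{S}$ be the geodesic ball of radius $R$ about $o$, and fix a continuous extension $\Phi$ of $\phi$ to the cone compactification $\overline{\mathcal{S}}$. For each $R$ we solve the Dirichlet problem for (\ref{pdej}) on $B_R$ with smooth boundary data $\Phi_R$ approximating $\Phi|_{\partial B_R}$ in $C^{2,\alpha}$. The minimal surface equation satisfies Conditions II and IV (a routine check with $b(s)=-s^2/(1+s^2)$), and the geometric hypothesis (\ref{s}) of Proposition \ref{bgem} is immediate on $\partial B_R$ and on its inner parallel spheres: their mean curvature equals $(n-2)\coth r\ge 0$, whereas $-\langle J,\eta\rangle=-\tanh r\le 0$, since both the inner normal $\eta$ and $J$ point towards $o$. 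The apparatus of Section \ref{thepde} therefore yields $v_R\in C^{2,\alpha}(\overline{B_R})$ with $\|v_R\|_\infty\le\|\Phi\|_\infty$. The interior gradient estimates of Section \ref{C} bound $\|\nabla v_R\|$ uniformly on every compact $K\subset\mathcal{S}$; standard elliptic bootstrap and a diagonal compactness argument extract a subsequential $C^2_\mathrm{loc}$ limit $v\in C^\infty(\mathcal{S})$ solving (\ref{pdej}).

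The main obstacle is showing $v(x)\to\phi(\xi)$ as $x\to\xi\in\partial_\infty\mathcal{S}$. For each such $\xi$ and each $\epsilon>0$ one constructs upper and lower barriers at $\xi$: continuous functions $w^\pm$ on $\overline{\mathcal{S}}$ which are super/subsolutions of (\ref{pdej}), agree with $\phi\pm\epsilon$ on a neighborhood of $\xi$ in $\partial_\infty\mathcal{S}$, and dominate/are dominated by $\pm\|\phi\|_\infty$ outside that neighborhood. Such barriers can be built by perturbing the classical barriers for the minimal surface equation on $\mathbb{H}^{n-1}$ (for instance, functions depending only on the signed distance to a geodesic hyperplane supporting $\xi$): the additional term $-\langle\nabla w,J\rangle/\sqrt{1+\|\nabla w\|^2}$ has absolute value at most $\|J\|\le 1$, so by choosing the barrier with sufficiently steep gradient in the relevant region it is absorbed into the super/subsolution inequality. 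The comparison principle from Section \ref{thepde} applied to $v_R$ against $w^\pm$ on $B_R$, followed by $R\to\infty$, then gives $w^-\le v\le w^+$, whence $\limsup_{x\to\xi}|v(x)-\phi(\xi)|\le\epsilon$ for every $\epsilon>0$. Uniqueness of the $G$-invariant solution follows from the same comparison principle applied to any two such solutions.
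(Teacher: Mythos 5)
Most of your outline coincides with the paper's proof: the reduction via Proposition \ref{ab}, the exhaustion of $\mathcal{S}$ by geodesic balls centered at $o$ with the verification of (\ref{s}) (both the inner normal and $J$ point toward $o$, so $-\langle J,\eta\rangle=-\tanh r<0\le (n-2)\coth r$), the diagonal/compactness argument via the local gradient estimates of Section \ref{C}, and your computation $\|J\|=\tanh d_{\mathcal{S}}(\cdot,o)$ are all correct and match the paper. The genuine gap is in the barrier step at infinity, which is where the real content of the corollary lies. You treat $J$ as a generic bounded perturbation and claim the drift term can be ``absorbed into the super/subsolution inequality'' by choosing the barrier with sufficiently steep gradient. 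For the minimal surface operator this is exactly backwards: the flux $a(s)=s/\sqrt{1+s^2}$ is bounded by $1$ and $a'(s)=(1+s^2)^{-3/2}\to0$, so steepening the barrier makes the equation \emph{more} degenerate in the gradient direction --- this is precisely the strong decay of the eigenvalue ratio (Condition II) around which the paper's dichotomy is organized, and it is the reason Theorem \ref{sder1} needs geometric hypotheses at all. Quantitatively: for $w=g(s)$ with $s$ the distance to the supporting totally geodesic hypersurface $T$ and $h=-g'>0$, the supersolution inequality under the worst-case alignment $\langle\nabla s,J\rangle=\|J\|$ reads $a'(h)h'\ge a(h)\bigl(\|J\|-(n-2)\tanh s\bigr)$, i.e. $\frac{d}{ds}\ln a(h(s))\ge\|J\|-(n-2)\tanh s$. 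Since $T$ must be chosen near the boundary point $\xi\in\partial_{\infty}\mathcal{S}$, it lies far from $o$, so near $T$ one has $\|J\|$ close to $1$ while $(n-2)\tanh s$ is close to $0$; the inequality then forces $a(h(s))\ge a(h(0))e^{cs}$ with $c>0$, incompatible with $a<1$ on any region of length exceeding $-\ln a(h(0))/c$ --- and the steeper the barrier, the shorter this region. For $n=3$ the bad region is all of $\Omega$, since $(n-2)\tanh s<1$ everywhere. So without sign information on $\langle\nabla s,J\rangle$ your construction collapses.

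This is exactly the point where the paper works: it proves the sign condition (\ref{cnod}), $\langle\nabla s,J\rangle<0$ throughout $\Omega$, by a geometric argument --- $J=\overrightarrow{H}$ is orthogonal to the geodesic spheres of $\mathcal{S}$ centered at $o$ and points toward $o$; comparing the totally geodesic hypersurface $\delta$ through a point orthogonal to $\nabla s$ with the level sets of $s$ and with the geodesic sphere centered at $o$ tangent to $\delta$, and invoking uniqueness of geodesics, gives $\langle\eta,J\rangle<0$ everywhere. With this sign the drift term in (\ref{pdej}) has the favorable sign for a decreasing barrier and is simply discarded, after which the explicit ODE barrier determined by $(\cosh s)^{n-2}\,a(-g'(s))=c$, with $c\uparrow1$ making $g(0)\to\infty$, settles the boundary behavior for the pure minimal surface equation. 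To repair your proof you must supply this (or an equivalent) verification that $J$ points away from the barrier region near each $\xi$; absorption by steepness cannot replace it for an SDER equation. Your closing uniqueness remark is fine once continuity up to $\partial_{\infty}\mathcal{S}$ is established, since two such solutions are continuous on the compactification away from the two fixed points and the comparison principle applies on an exhaustion.
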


\begin{proof}
Denote also by $\varphi$ a $C^{0}$ extension of $\varphi\in C^{0}\left(
\partial_{\infty}\mathcal{S}\right)  $ to $\overline{\mathcal{S}}.$ Given
$k>0,$ from the results of Section \ref{thepde} used to prove Theorem
\ref{sder1}, we see that there is a solution $u_{k}\in C^{\infty}\left(
B_{k}\right)  \cap C^{0}\left(  \overline{B}_{k}\right)  $ of the minimal
surface equation on the geodesic ball $B_{k}$ of $\mathcal{S}$ centered at $o$
and with radius $k$ such that $u_{k}|\partial B_{k}=\varphi|\partial B_{k}$ if
inequality (\ref{cb}) is satisfied$.$ But this is the case since $H_{\partial
B_{k}}>0$ and, from what we have seen above%
\[
-\left\langle \eta,J\right\rangle =\left\langle \nabla r,\overrightarrow
{H}\right\rangle <0.
\]

From the diagonal method and the local gradient estimates (as remarked at the
end of Section \ref{C}), it follows that $u_{k}$ contains a subsequence
converging uniformly on compact subsets of $\mathcal{S}$ to a solution $u\in
C^{2}\left(  \mathcal{S}\right)  $ of the PDE (\ref{PDE3}). Regularity theory
gives $u\in C^{\infty}\left(  \mathcal{S}\right)  .$ To prove that $u\in
C^{0}\left(  \overline{\mathcal{S}}\right)  $ and $u|\partial_{\infty
}\mathcal{S}=\varphi|\partial_{\infty}\mathcal{S}$ it is enough to prove that
the PDE (\ref{pdej}) is regular at infinity (see Section 6.1 of \cite{RT1}).

Given $p\in\partial_{\infty}\mathcal{S}$ and a neighborhood $W\subset
\partial_{\infty}\mathcal{S}$ of $p,$ let $T$ be a totally geodesic
hypersurface of $\mathbb{H}^{n}$ such that $\partial_{\infty}T\subset W.$ Let
$\Omega$ be the connected component of $\mathbb{H}^{n}\backslash T$ such that
$W\subset\partial_{\infty}\Omega.$ We may assume wlg that $o\notin
\overline{\Omega}.$ We shall construct a barrier in $\Omega$ of the forma
$w=g\left(  s\right)  $ where $s:\Omega\rightarrow\mathbb{R}$ is the distance
to $T=\partial\Omega.$ Below we shall show that condition%
\begin{equation}
\left\langle \nabla s,J\right\rangle <0 \label{cnod}%
\end{equation}
is satisfied and so it turns out that $w$ will be a special case of Lemma 6.2
of \cite{RT1}. For the convenience of the reader we repeat the computation in
our special case, also leading to an explicit formula.

We may compute $\Delta s=\left(  n-2\right)  \tanh s.$ Then, for $w=g\left(
s\right)  $ with $g^{\prime}\left(  s\right)  <0$ we have%
\begin{align*}
\operatorname{div}\left(  \frac{a\left(  \left\Vert \nabla w\right\Vert
\right)  }{\left\Vert \nabla w\right\Vert }\nabla w\right)   &  =-a\left(
-g^{\prime}\left(  s\right)  \right)  \Delta s+a\left(  -g^{\prime}\left(
s\right)  \right)  g^{\prime\prime}\left(  s\right) \\
&  =-\left(  n-2\right)  a\left(  -g^{\prime}\left(  s\right)  \right)  \tanh
s+a\left(  -g^{\prime}\left(  s\right)  \right)  g^{\prime\prime}\left(
s\right)
\end{align*}
and$,$ because of (\ref{cnod}), $w$ will be a supersolution of our equation if
$g$ solves the equation%
\begin{align*}
0  &  =\tanh s-\frac{a^{\prime}}{a}\left(  -g^{\prime}\left(  s\right)
\right)  g^{\prime\prime}\left(  s\right) \\
&  =\frac{d}{ds}\left(  \left(  n-2\right)  \ln\cosh s+\ln a\left(
-g^{\prime}\left(  s\right)  \right)  \right)
\end{align*}
leading to
\[
\left(  \cosh s\right)  ^{n-2}a\left(  -g^{\prime}\left(  s\right)  \right)
=c=\text{constant.}%
\]

With
\[
a\left(  v\right)  =\frac{v}{\sqrt{1+v^{2}}}%
\]
we thus get%
\[
g\left(  s\right)  =c\int_{s}^{\infty}\frac{\left(  \cosh t\right)  ^{n-2}%
}{\sqrt{1-c^{2}\left(  \cosh t\right)  ^{4-2n}}}dt
\]
where $0<c<1$ and obviously $g\left(  0\right)  \rightarrow+\infty$ for
$c\rightarrow1.$ We show next that (\ref{cnod}) is satisfied.

Let $p\in\Omega$ and let $\Delta$ be the totally geodesic hypersurface of
$\mathbb{H}^{n}$ through $p$ orthogonal to $\nabla s$ at $p.$ Let $\alpha$ be
the level hypersurface $s^{-1}\left(  s\left(  p\right)  \right)  .$ Since
$\alpha$ is convex towards the connected component of $\mathcal{S}%
\backslash\alpha$ that contains $T$ it follows that $\Delta$ is contained in
the closure of the connected component of $\mathcal{S}\backslash\alpha$ which
does not contain $T.$ The unit normal vector $\eta$ along $\Delta$ such that
$\eta(p)=\nabla s\left(  p\right)  $ points to the connected component of
$\mathcal{S}\backslash\Delta$ which does not contain $T.$ Let $p_{0}\in\Delta$
be such that $d\left(  o,p_{0}\right)  =d\left(  o,\Delta\right)  .$ Then the
geodesic sphere centered at $o$ and passing through $p_{0}$ is tangent to
$\Delta$ at $p_{0}.$ It follows that $\left\langle \eta\left(  p_{0}\right)
,J\left(  p_{0}\right)  \right\rangle =-\left\Vert J\left(  p_{0}\right)
\right\Vert <0,$ because $J\left(  p_{0}\right)  =\overrightarrow{H}\left(
p_{0}\right)  $ and, as we have seen above, $\overrightarrow{H}\left(
p_{0}\right)  $ is orthogonal to this sphere and points to its center. We then
have that $\left\langle \eta,J\right\rangle $ is everywhere negative otherwise
it would exist a point where $J$ and $\Delta$ would be tangent. By uniqueness
of the geodesics, a geodesic of $\Delta$ would coincide with a geodesic
issuing from $o,$ contradiction! Therefore the PDE is regular at infinity and
thus $u\in C^{\infty}\left(  \mathbb{H}^{n}\right)  \cap C^{0}\left(
\partial_{\infty}\overline{\mathbb{H}}^{n}\backslash\left\{  z_{n}%
=+\infty,\left(  0,...,0\right)  \right\}  \right)  $ and
\[
u|\partial_{\infty}\mathbb{H}^{n}\backslash\left\{  z_{n}=+\infty,\left(
0,...,0\right)  \right\}  =\psi,
\]
proving the corollary.
\end{proof}

\textbf{Remarks}

a) It is clear that these examples hold for the family of PDE's (\ref{PDE1})
under the conditions of Theorems \ref{mder1} and \ref{sder1}. For simplicity
we consider only the more interesting case of the minimal surface equation.

b) The one parameter subgroup of isometries of the hyperbolic space considered
in the second example above is a particular case of transvections along a
geodesic, which is defined in any symmetric space (see \cite{H}). Thus, it
makes sense to investigate a possible extension of Corollary \ref{co} to non
compact symmetric spaces, specially on rank 1 symmetric spaces since these
have strictly negative curvature.

c) A well known problem which is being investigated in the last decades is the
existence or not of non constant bounded harmonic functions, and more recently
bounded non constant solutions of the $p-$Laplace PDE and the minimal
surface\ equation on a Hadamard manifold. A way of proving existence is by
solving the asymptotic Dirichlet problem for non constant continuous boundary
data at infinity. However, in a Hadamard manifold which is a Riemannian
product $N=M\times\mathbb{R}$, since the sectional curvature in vertical
planes are zero, it is likely true that any solution which extends
continuously to $\partial_{\infty}N$ is necessarily constant. It is a trivial
remark that bounded nonconstant $\mathbb{R}-$invariant solutions of the the
minimal surface equation which are continuous on $\overline{N}$ except for two
points in $\partial_{\infty}N$ exist when $M$ is a $2-$dimensional Hadamard
manifold with curvature bounded by above by a negative constant.

\bigskip

\bigskip

\begin{center}%
\begin{tabular}
[c]{lll}%
Jaime Ripoll & ** & Friedrich Tomi\\
Universidade Federal do R. G. do Sul & ** & Heidelberg University\\
Brazil & ** & Germany\\
jaime.ripoll@ufrgs.br & ** & tomi@mathi.uni-heidelberg.de
\end{tabular}

\end{center}

\end{document}